\theoremstyle{plain}
\newtheorem{theorem}{Theorem}[section]
\newtheorem{proposition}[theorem]{Proposition}
\newtheorem{lemma}[theorem]{Lemma}
\newtheorem{corollary}[theorem]{Corollary}
\theoremstyle{definition}
\theoremstyle{remark}
\newtheorem{remark}[theorem]{Remark}
\newtheorem{example}[theorem]{Example}
\numberwithin{equation}{section}
\numberwithin{table}{section}
\newcolumntype{C}{>{$}c<{$}}
\newcolumntype{L}{>{$}l<{$}}
\DeclareMathOperator{\ad}{ad}
\DeclareMathOperator{\Der}{Der}
\DeclareMathOperator{\End}{End}
\DeclareMathOperator{\im}{Im}
\DeclareMathOperator{\re}{Re}
\DeclareMathOperator{\Stab}{Stab}
\newcommand{\Lie}[1]{\operatorname{\rm{#1}}}
\newcommand{\lie}[1]{\operatorname{\mathfrak{#1}}}
\newcommand{\G}{\Lie{G}}
\newcommand{\LH}{\Lie{H}}
\newcommand{\LK}{\Lie{K}}
\newcommand{\GL}{\Lie{GL}}
\newcommand{\SO}{\Lie{SO}}
\newcommand{\SL}{\Lie{SL}}
\newcommand{\Sp}{\Lie{Sp}}
\newcommand{\SU}{\Lie{SU}}
\newcommand{\Un}{\Lie{U}}
\newcommand{\g}{\lie{g}}
\newcommand{\h}{\lie{h}}
\newcommand{\lk}{\lie{k}}
\newcommand{\n}{\lie{n}}
\newcommand{\so}{\lie{so}}
\newcommand{\spto}{\lie{sp}(2)\oplus\lie{sp}(1)}
\newcommand{\bR}{{\mathbb R}}
\newcommand{\bZ}{{\mathbb Z}}
\newcommand{\bT}{{\mathbb T}}
\newcommand{\HKT}{\textsc{hkt}\xspace}
\newcommand{\QKT}{\textsc{qkt}\xspace}
\newcommand{\Smod}{{\mathcal S}}
\newcommand{\Wmod}{{\mathcal W}}
\newcommand{\alt}{\mathbf{a}}
\newcommand{\Hodge}{{*}}
\newcommand{\hook}{{\lrcorner\,}}
\newcommand{\LC}{\nabla}
\newcommand{\w}{\wedge}
\DeclarePairedDelimiter{\Span}{\langle}{\rangle}
\newcommand{\abs}[1]{\left\lvert#1\right\rvert}
\newcommand{\br}{\hspace{0pt}}
\begin{document}

\title[Harmonic structures \& intrinsic torsion]{Harmonic structures and intrinsic torsion} 

\author{Diego Conti}
\address[D.~Conti]{ Dipartimento di Matematica e Applicazioni\\ Universit\`a di Milano
  Bicocca\\ Via Cozzi 55\\ 20125 Milano\\ Italy}
\email{diego.conti@unimib.it}

\author{Thomas Bruun Madsen}
\address[T.\,B.~Madsen]{Centro di Ricerca Matematica Ennio De Giorgi\\ Scuola Normale Superiore\\ 
Collegio Puteano\\ Piazza dei Cavalieri 3\\  56100 PISA\\ Italy}
\email{thomas.madsen@sns.it}

\thanks{The authors are grateful to Simon Salamon for generously sharing ideas and showing his interest in this project.
TBM thankfully acknowledges financial support from the \textsc{Danish Council for Independent Research, Natural Sciences}.}

\begin{abstract}
   We discuss the construction of \br\( \Sp(2)\Sp(1) \)-structures whose fundamental form is closed.
  In particular, we find \( 10 \) new examples of \( 8 \)-dimensional nilmanifolds that admit an invariant closed \( 4 \)-form 
  with stabiliser \( \Sp(2)\Sp(1) \). Our constructions entail the notion of \( \SO(4) \)-structures
  on \( 7 \)-manifolds. We present a thorough investigation of the intrinsic torsion of such structures, leading to the construction of 
  explicit Lie group examples with invariant intrinsic torsion.
\end{abstract}

\subjclass[2010]{Primary 53C10; Secondary 53C15, 53C26, 53C30}

\maketitle

\section{Introduction}
\label{sec:intro}

A quaternionic K\"ahler structure on a manifold \( M \) of dimension \( 4n\geqslant8 \) is an \( \Sp(n)\Sp(1) \)-structure preserved by the Levi-Civita connection; equivalently, the \( 4 \)-form \( \Omega \) associated to the structure is required to be parallel (cf. \cite{Gray:ANoteOn}). This condition makes \( M \) a quaternionic Hermitian manifold, and forces the underlying metric to be Einstein (see \cite{Salamon:RiemannianGeometryAnd}). 

\cite{Swann:AspectsSymplectiquesDe} was the first to realise that if the dimension of \( M \) is at least \( 12 \), then \( \Omega \) cannot be closed unless it is parallel. However, his arguments also showed that in dimension \( 8 \), the condition \( d\Omega=0 \) fails to fully determine the covariant derivative, thereby leaving open the possible existence of \emph{harmonic \( \Sp(2)\Sp(1) \)-structures}, that is \( 8 \)-manifolds admitting an \( \Sp(2)\Sp(1) \)-form which is closed but not parallel. This problem can be viewed as a quaternionic analogue of the search for strictly almost K\"ahler manifolds in almost Hermitian geometry.  

This use of the adjective ``harmonic'', consistent with \cite[Definition 26]{Witt:Specialmetricsand}, refers to harmonicity of \( \Omega \): as \( \Omega \) is self-dual, the condition \( d\Omega = 0 \)  implies
\( d^\Hodge\Omega = 0 \), which means \( \Omega \) is a harmonic \( 4 \)-form.

Quaternionic K\"ahler manifolds are very rigid objects. In fact, there are no non-symmetric compact quaternionic K\"ahler \( 8 \)-manifolds of positive scalar curvature, and a conjecture by
\cite{LeBrun-S:Strongrigidityof} asserts that the same result holds in dimensions \( 4n\geqslant12 \).  This scarceness of examples motivates the study of almost quaternionic
Hermitian geometries which are, in some sense, close to being quaternionic K\"ahler. In dimension \( 8 \), harmonic \( \Sp(2)\Sp(1) \)-structures constitute one such class.

It was \cite{Salamon:Almostparallelstructures} who first constructed an example of a harmonic\linebreak \( \Sp(2)\Sp(1) \)-structure. The basic idea, which was developed further by \cite{Giovannini:SpecialStructuresAnd} in his PhD thesis (see also \cite{Catellani:TriLagrangianmanifolds}), uses the identification of \( \Sp(2)\Sp(1)\cap\SO(6) \)
with \( \SO(3) \) acting diagonally on \( \bR^3\oplus\bR^3 \); one constructs the fundamental form \( \Omega \) on \( \bT^2\times N^6 \) from a triple of forms, namely a pair of \( 3 \)-forms \( \tilde\alpha,\tilde\beta\in\Omega^3(N) \) and a non-degenerate \( 2 \)-form \( \tau\in\Omega^2(N) \). In addition to the condition \(\Stab(\tilde{\alpha},\tilde{\beta},\tau)\cong\SO(3) \),
one imposes the closedness of these three forms so as to ensure \( d\Omega=0 \). Salamon also posed the idea that one can generalise the construction by starting from the subgroup  \( \Sp(2)\Sp(1)\cap\SO(7)\cong\SO(4) \). We confirm that this is the case: then the fundamental form \( \Omega \) on \( S^1\times N^7 \) arises from a \( 3 \)-form \( \alpha \) and a \( 4 \)-form \( \beta \) on \( N \) whose stabilisers are different copies of the non-compact exceptional Lie group \( \G_2^* \) with intersection isomorphic to \( \SO(4) \). Closedness of \( \Omega \) is enforced by requiring that both of the forms \( \alpha \) and \( \beta \) are closed. This condition, \( d\alpha=0=d\beta \), can be rephrased in terms of the intrinsic torsion \( \xi\in T^*N\otimes\so(4)^\perp \); it forces \( \xi \) to take its values in a \( 49 \)-dimensional submodule. As \( \Hodge\alpha=\beta \), we have dubbed such \( \SO(4) \)-structures as \emph{harmonic}.

Whilst our construction recovers one of the known examples of a harmonic \( \Sp(2)\Sp(1) \)-structure, it also enables us to find several new examples. More precisely, we show that if \( N_i \), \( 1\leqslant i\leqslant11 \), is a nilpotent Lie group associated with the algebra \( \mathfrak n_i \) in Table \ref{table:harmonicliealg},
then  \( N_i \) admits a left-invariant harmonic \( \SO(4) \)-structure, hence \( \LK_i =\bR\times N_i\) admits a harmonic \( \Sp(2)\Sp(1) \)-form. By choosing a lattice \( \Gamma_i\subset \LK_i \), we obtain \( 11 \) examples of compact
harmonic \( \Sp(2)\Sp(1) \)-manifolds. Except for the case corresponding to \( (0,0,0,0,0,0,12,13) \), these examples are new. 

\begin{theorem}
\label{thm:main}
Each of the nilmanifolds \( M_i=\Gamma_i \backslash  \LK_i \), \( 1\leqslant i\leqslant 11 \), admits a harmonic \( \Sp(2)\Sp(1) \)-structure.
\end{theorem}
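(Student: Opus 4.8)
The plan is to derive the theorem from the construction recalled in the introduction, which converts a harmonic \( \SO(4) \)-structure on a \( 7 \)-manifold \( N \) into a harmonic \( \Sp(2)\Sp(1) \)-form on \( \bR\times N \). Writing \( e^0 \) for the coframe along the \( \bR \)-factor, this form has the shape \( \Omega=e^0\w\alpha+\beta \), so that \( d\Omega=-e^0\w d\alpha+d\beta \) vanishes exactly when \( d\alpha=0=d\beta \); moreover \( \Stab(\Omega)\cong\Sp(2)\Sp(1) \) because \( \SO(4)=\Sp(2)\Sp(1)\cap\SO(7) \) is the common stabiliser of the pair \( (\alpha,\beta) \), while \( \beta=\Hodge\alpha \) makes \( \Omega \) self-dual. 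It therefore suffices to equip each \( N_i \) with a \emph{left-invariant} harmonic \( \SO(4) \)-structure: left-invariance makes \( \Omega \) left-invariant on \( \LK_i=\bR\times N_i \), and since each \( \mathfrak n_i \) has rational structure constants, Mal'cev's criterion provides a lattice \( \Gamma_i\subset\LK_i \), so the invariant closed form descends to the compact nilmanifold \( M_i=\Gamma_i\backslash\LK_i \).

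The problem thus reduces to the Lie-algebra level. For each \( \mathfrak n_i \) of Table \ref{table:harmonicliealg} I would exhibit a pair \( (\alpha,\beta)\in\Lambda^3\mathfrak n_i^*\times\Lambda^4\mathfrak n_i^* \) with \( \beta=\Hodge\alpha \) for which (i) the common stabiliser \( \Stab(\alpha)\cap\Stab(\beta) \) is the prescribed copy of \( \SO(4) \), and (ii) both forms are closed for the Chevalley--Eilenberg differential \( de^j=\sum_{k<l}c^j_{kl}\,e^k\w e^l \) determined by the structure equations. Condition (ii) is a finite linear computation in the coefficients of \( \alpha \) and \( \beta \) once a coframe is fixed, while condition (i) is an open, \( \GL(7,\bR) \)-invariant condition that holds on an entire orbit of flat models; the task is to choose, for each algebra, an adapted coframe in which a closed pair lands in that orbit.

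The main obstacle is to meet (i) and (ii) at once. A generic closed \( 3 \)-form on \( \mathfrak n_i \) does not have stabiliser \( \SO(4) \), and a generic \( \SO(4) \)-datum is not closed, so the two demands pull against each other, with no a priori guarantee that their intersection is non-empty on a given nilpotent algebra. I therefore expect the core of the argument to be a case-by-case search: for each of the eleven algebras one makes an ansatz for \( (\alpha,\beta) \) adapted to the structure equations so that closedness can be arranged, and then verifies that the stabiliser condition survives, for instance by producing an explicit frame identifying \( (\alpha,\beta) \) with the standard \( \SO(4) \)-model.

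Finally, these structures are genuinely harmonic, that is closed but not parallel. A parallel \( \Sp(2)\Sp(1) \)-form is quaternionic K\"ahler, hence the induced metric is Einstein; but a left-invariant metric on a non-abelian nilpotent Lie group is never Einstein. As the \( \mathfrak n_i \) are non-abelian, closedness forces non-parallelism, and no separate argument is required.
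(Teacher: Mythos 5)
Your reduction is correct and matches the paper's: a left-invariant pair \( (\alpha,\beta) \) with \( d\alpha=0=d\beta \) on \( \n_i \) gives a closed invariant \( \Omega=\alpha\w e^8+\beta \) with stabiliser \( \Sp(2)\Sp(1) \) on \( \lk_i=\bR\oplus\n_i \), which descends to the nilmanifold; and non-parallelism follows from Milnor's result that nilpotent non-abelian Lie groups carry no Einstein left-invariant metrics. The difficulty is that everything after this reduction is left as a promissory note. You write ``I would exhibit a pair \( (\alpha,\beta) \)'' and ``I expect the core of the argument to be a case-by-case search,'' but you never produce the pairs, and --- as you yourself observe --- there is no a priori reason the closedness conditions and the stabiliser condition can be met simultaneously on any given nilpotent algebra. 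The existence of these eleven structures \emph{is} the theorem; without the explicit data the argument establishes nothing beyond the (correct) framework.

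For comparison, the paper does not attack the \( 7 \)-dimensional ansatz directly. It proves two reduction results (Propositions \ref{prop:nilconA} and \ref{prop:nilconB}): choosing a suitable unit \( 1 \)-form \( \eta \) with \( \n=\ker\eta \) an ideal, a harmonic \( \SO(4) \)-structure on \( \g=\n\rtimes_b\bR \) is equivalent to a half-flat \( S(\Un(2)\times\Un(1)) \)- or \( \SL(3,\bR) \)-structure on the \( 6 \)-dimensional ideal together with a derivation \( b \) satisfying algebraic analogues of the Hitchin flow equations, e.g.\ \( b\cdot\psi^+-d\omega-8d\gamma=0=\tfrac12 b\cdot\omega^2+d\psi^- \). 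This converts the search into a structured problem in six dimensions, which is then solved by exhibiting explicit families of structure equations and adapted coframes, followed by a (nontrivial) identification of the resulting algebras with the entries \( \n_1,\dotsc,\n_{11} \) of Table \ref{table:harmonicliealg} using invariants such as Betti numbers and the behaviour of exact \( 2 \)-forms. If you want to complete your proof along your own lines, you must at minimum supply, for each \( \n_i \), an explicit coframe in which \( (\alpha,\beta) \) takes the normal form \eqref{eq:defforms} and verify closedness against the structure constants; the paper's two propositions are the tool that makes this search tractable.
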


As these nilmanifolds are not Einstein (cf. \cite{Milnor:Curvaturesofleft}), \( \Omega \) cannot be parallel. In fact, for any  \( \Sp(2)\Sp(1) \)-structure on \( \bR\times N \) induced by an \( \SO(4) \)-structure on \( N \),  \( \Omega \) can be parallel only if the associated metric is hyper-K\"ahler. In this case, the underlying \( \SO(4) \)-structure is torsion-free, i.e., \( \xi=0 \). In the compact setting this implies that \( N \) is finitely covered (via a local isometry) by a flat torus or the product of a flat torus with a K3 surface (cf. Theorem \ref{thm:intrzero}).

\begin{table}[tp]
  \centering
  \begin{tabular}{LCC}
    \toprule
    \n_1        &      (0,0,0,12,23,-13,26-34-16+25)  &    (3,7,10)  \\ 
    \midrule
    \n_2        &      (0,0,0,0,0,12,13)                      &    (5,13,21) \\
    \midrule
    \n_3        &      (0,0,0,0,12-34,13+24,14)               &    (4,11,16) \\
    \midrule
    \n_4        &      (0,0,0,0,12+34,23,24)                  &    (4,11,17) \\
    \midrule
    \n_5        &      (0,0,0,0,12,13,34)                   &    (4,11,16) \\
    \midrule
    \n_6        &      (0,0,0,12,13,23+14,25+34)              &    (3,6,10)  \\
    \midrule
    \n_7        &      (0,0,0,12,13,15+35,25+34)              &    (3,6,10)  \\
    \midrule
    \n_8        &      (0,0,0,0,0,12,34)                      &    (5,12,18) \\
    \midrule
    \n_9        &      (0,0,0,0,0,12,14+23)                   &    (5,12,18) \\
    \midrule
    \n_{10}     &      (0,0,0,0,0,13+42,14+23)                &    (5,12,18) \\
    \midrule
    \n_{11}    &       (0,0,0,0,12,13,14+25)                  &    (4,9,13)  \\
    \bottomrule
  \end{tabular}
   \caption{The Lie algebras \( \lk_i=\bR\oplus\n_i \), with \( \n_i \) listed above, all admit a harmonic \( \Sp(2)\Sp(1) \)-structure. The rightmost column gives the Betti numbers
     \( b_1(\n_i),b_2(\n_i),b_3(\n_i) \). The notation should be understood as follows. The dual \( \n_2^* \) of the Lie algebra \( \n_2 \)  has a basis \( n^1,\ldots,n^7 \)
     such that \( dn^1=0=\cdots=dn^5 \), \( dn^6=n^1\w n^2 \) and \( dn^7=n^1\w n^3 \).}
   \label{table:harmonicliealg}
\end{table}

Harmonic \( \Sp(2)\Sp(1) \)-structures appear to be fairly rigid objects. Indeed, case-by-case computations for the above nilmanifold examples show that, at the infinitesimal level, the \( \SO(8) \) orbit of the \( \Sp(2)\Sp(1) \)-structure contains no other harmonic structures than those obtained by Lie algebra automorphisms. 

On a \( 7 \)-dimensional nilpotent Lie algebra, there is a very useful obstruction to the existence of a calibrated \( \G_2 \)-structure \cite[Lemma 3]{Conti-F:Nilmanifoldswithcalibrated}. As the forms \( \alpha \) and \( \beta \) are very similar to the calibrated \( \G_2 \)-form, in that they have open \( \GL(7,\bR) \)-orbit and are calibrated, one might be tempted to look for similar obstructions in our setting so as to classify nilpotent Lie algebras admitting a harmonic \( \SO(4) \)-structures. This, however, turns out to be less tractable than expected. The main complication originates from the fact that the form \( v\mapsto (v\hook\alpha)\w(v\hook\alpha)\w \alpha \), similarly for \( \beta \), has split signature. In addition to being closed and non-degenerate, the forms \( \alpha \) and \( \beta \) are required to satisfy certain compatibility conditions. This is reminiscent of the case of half-flat \( \SU(3) \)-structures, defined by a pair \( (\psi,\sigma) \) of closed forms (cf. \cite{
Hitchin:Stableformsand}). Nilmanifolds with a half-flat structure were classified in \cite{Conti:Halfflatnilmanifolds}, using a cohomological obstruction based on the fact that a simple \( 2 \)-form \( \gamma \) satisfying \( \gamma\w\psi = 0 =\gamma \w\sigma \) is necessarily zero. However, this  way of expressing the compatibility conditions is not appropriate in the case of \( \SO(4) \), because the equation \( \gamma\w\alpha=0 \) alone implies \( \gamma=0 \). In summary, a classification of harmonic \( \SO(4) \)-structures on nilmanifolds remains an open problem.

As already mentioned, compact \( 7 \)-manifolds with an \( \SO(4) \)-structure whose holonomy reduces are very rare. It is therefore natural to consider generalisations of the torsion-free condition, e.g., the case of invariant intrinsic torsion. In our case, this means \( \xi \) lies in a trivial \( \SO(4) \)-submodule of \( T^*N\otimes\so(4)^\perp \); this condition is complementary to the harmonic condition in the sense that an \( \SO(4) \)-structure which satisfies both criteria is torsion-free. Whilst being relatively unexplored, the invariant intrinsic torsion setting is already known to include interesting geometries such as nearly-K\"ahler manifolds and nearly parallel \( \G_2 \)-manifolds (see also \cite{Chiossi-M:SO3StructuresOn}). As it is the case for these latter two types of geometries, we show (Proposition \ref{prop:consttor}) that any \( \SO(4) \)-structure with invariant intrinsic torsion, in fact, has constant torsion, i.e., \( \xi \) is independent of \( n\in N \). 

In the final part of the paper, we explain the principle behind a potential classification of invariant intrinsic torsion \( \SO(4) \)-structures on Lie groups, and use this approach to provide a number of examples. The basic idea is that for a  Lie group \( \LH \), any left-invariant \( \SO(4) \)-structure is obtained by specifying an adapted coframe \( e^1,e^2,e^3,e^4,w^1,w^2,w^3 \) of the Lie algebra \( \h \). This structure has invariant intrinsic torsion if and only if the torsion \( \tau \) of the flat connection on \( \LH \) lies in \( 2\bR\oplus\partial((\bR^7)^*\otimes\so(4)) \), where \( \partial \) denotes the canonical alternating map (see \eqref{eqn:alternating}). In order to obtain concrete examples, we pick an adapted coframe and make a suitable ansatz for the form of \( \tau \). Subsequently, we check that the equations specified by \( \tau \) actually determine a Lie algebra structure.

For the groups \( \LH_i \), corresponding to the Lie algebras appearing in Table \ref{table:invtorliealg}, we also describe the type of the associated product \( \Sp(2)\Sp(1) \)-structure on  \( S^1\times \LH_i \). On each of these Lie groups, the same adapted frame that determines the \( \Sp(2)\Sp(1) \)-structure also determines a reduction to \( \Sp(2) \). In particular, \( S^1\times\LH_i \) is an almost hyper-Hermitian manifold, meaning Riemannian with a compatible almost hypercomplex structure. Whilst many of the examples are, in fact, (integrable) hyper-Hermitian, only two of them are hyper-K\"ahler (hence, necessarily, flat).

In summary, our analysis of invariant intrinsic torsion \( \SO(4) \)-structures leads to:

\begin{theorem}
\label{thm:invintrtor}
Let \( \LH \) be a Lie group whose Lie algebra \( \h \) belongs to the list in Table \ref{table:invtorliealg}. The left-invariant \( \SO(4) \)-structure on \( \LH \) determined by the coframe \( e^1,e^2,e^3,e^4 \), \( w^1,w^2,w^3\) of \( \h^* \) has invariant intrinsic torsion. 

If \( \LH \) corresponds to either \( \h_1 \) or \( \h_5^0=\h_6^0 \), then the associated product structure on \( S^1\times \LH \) is flat and hyper-K\"ahler. In the remaining cases, the non-zero components of the intrinsic torsion are \( EH \) and \( KH \),  meaning these are quaternionic Hermitian structures which are not quaternionic K\"ahler.
\end{theorem}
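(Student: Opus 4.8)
The plan is to treat the two assertions of the statement separately, relying throughout on the criterion recorded above: the left-invariant \( \SO(4) \)-structure determined by the adapted coframe has invariant intrinsic torsion precisely when the torsion \( \tau \) of the flat connection lies in \( 2\bR\oplus\partial((\bR^7)^*\otimes\so(4)) \). Recall that on a Lie group the flat left-invariant connection has torsion equal, up to sign, to the Lie bracket, so that \( \tau \) is encoded directly by the structure equations \( de^a \), \( dw^b \) of \( \h \). This is what makes the criterion effective in practice.

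First I would establish the invariant-torsion claim uniformly. For each \( \h_i \) in the table the structure equations determine \( \tau \), and since the algebras were produced from an ansatz that places \( \tau \) in \( 2\bR\oplus\partial((\bR^7)^*\otimes\so(4)) \), the verification reduces to two routine checks: that the Jacobi identity holds (equivalently \( d^2=0 \) on the coframe, so that the prescribed \( \tau \) genuinely defines a Lie algebra), and that the coframe is adapted in the sense fixed earlier. Membership in the prescribed submodule is then immediate from the explicit form of \( \tau \), and the criterion gives invariance of the intrinsic torsion; by Proposition~\ref{prop:consttor} the torsion is in fact constant.

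Next I would isolate the flat hyper-K\"ahler cases \( \h_1 \) and \( \h_5^0=\h_6^0 \). Here I would show that the trivial \( 2\bR \)-component of \( \tau \) vanishes, so that \( \tau\in\partial((\bR^7)^*\otimes\so(4)) \); this part is pure gauge and forces the intrinsic torsion \( \xi \) to vanish, i.e.\ the \( \SO(4) \)-structure is torsion-free. As observed in the introduction, a torsion-free \( \SO(4) \)-structure induces an \( \Sp(2)\Sp(1) \)-structure on \( S^1\times\LH \) with parallel \( \Omega \), which can only occur when the metric is hyper-K\"ahler. Flatness then follows because a left-invariant hyper-K\"ahler metric is Ricci-flat, hence Einstein, and a nilpotent Lie group carrying an Einstein invariant metric is flat by Milnor's theorem; alternatively one reads off the vanishing of the curvature directly from the structure equations.

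Finally I would analyse the remaining algebras, where the surviving part of \( \tau \) is a nonzero trivial \( \SO(4) \)-summand. The task is to identify how it sits inside the intrinsic-torsion module \( \bR^8\otimes(\spto)^\perp \) of the induced \( \Sp(2)\Sp(1) \)-structure on \( S^1\times\LH \). I would branch the \( \Sp(2)\Sp(1) \)-decomposition of this module to \( \SO(4)=\Sp(2)\Sp(1)\cap\SO(7) \) and match the trivial \( \SO(4) \)-summands against the irreducible pieces, showing that only the components labelled \( EH \) and \( KH \) receive a contribution while all others vanish; this places the structures in the quaternionic Hermitian class, and since \( \xi\neq0 \) they are not quaternionic K\"ahler. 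The main obstacle is precisely this representation-theoretic bookkeeping: tracking the \( \SO(4)\hookrightarrow\Sp(2)\Sp(1) \) branching carefully enough to pin down which irreducible summands the trivial torsion directions populate, and confirming that the gauge directions drop out. Everything else is a finite, case-by-case computation with the explicit coframes.
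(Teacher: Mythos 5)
Your strategy for the first assertion coincides with the paper's: each algebra in Table \ref{table:invtorliealg} is produced from an ansatz placing the torsion \( \tau \) of the flat connection in \( 2\bR\oplus\partial(T^*\otimes\so(4)) \), and the only things to check are \( d^2=0 \) and membership of \( \tau \) in that submodule (the paper in fact runs this in the opposite direction, deriving the algebras \( \h_5,\dots,\h_8 \) from the integrability condition \eqref{eqn:flambdamu1}, which it reinterprets as saying that \( \Span{e^1,\dots,e^4} \) is dual to a self-dual Einstein Lie algebra of scalar curvature \( 3pq \)). For the flat cases your logic (vanishing \( 2\bR \)-component \( \Rightarrow\ \xi=0\ \Rightarrow \) parallel \( \Omega \) \( \Rightarrow \) hyper-K\"ahler) is fine, but the flatness citation is off: \( \h_1 \) and \( \h_5^0=\h_6^0 \) are solvable, not nilpotent, so Milnor's result on nilpotent groups does not apply. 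You need either the Alekseevskii--Kimelfeld theorem that homogeneous Ricci-flat metrics are flat, or the paper's more elementary observation that the forms \( \sigma_1,\sigma_2,\sigma_3 \) of \eqref{eq:sp2forms} are closed, whence the structure is hyper-K\"ahler by Hitchin's lemma and flat by homogeneity. Your fallback of computing the curvature directly also works.

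For the last assertion your route is genuinely different from the paper's, and the two halves of the claim have different status under it. The branching argument does cleanly prove the \emph{vanishing} half: one checks that \( EH \) and \( KH \) each contain exactly one trivial \( \SO(4) \)-summand while \( ES^3H \) and \( KS^3H \) contain none, and since the \( \Sp(2)\Sp(1) \)-torsion is the image of the invariant \( \SO(4) \)-torsion under the equivariant injection of \( T^*\otimes\so(4)^\perp \) into \( \tilde T^*\otimes(\spto)^\perp \), it lies in the trivial isotypic part, hence in \( EH\oplus KH \). But the theorem also asserts that \emph{both} components are non-zero, and that does not follow from branching: you must evaluate the resulting linear map \( 2\bR\to\bR\oplus\bR \) on the specific torsion values \( (\lambda,\mu) \) of each example and check that neither coordinate vanishes. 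This is not automatic --- for instance \( \h_2 \) and \( \h_4 \) both have \( \lambda=0 \), so one has to rule out that this line maps into a single summand. The paper handles this by brute force: it computes the Levi-Civita connection, applies the alternation map \( \alt \) to land in \( \Lambda^3\tilde T^* \), exhibits a non-zero \( EH \)-piece of the form \( (w^s\hook\sigma_s)\w\sigma_s \) and a non-zero \( KH \)-piece, and kills the remaining ambiguity \( KS^3H=\ker\alt\cap\bigl(\tilde T^*\otimes(\spto)^\perp\bigr) \) using the quaternionic (\textsc{hkt}/\textsc{qkt}) property of the examples. Your approach buys a uniform, computation-free proof that nothing outside \( EH\oplus KH \) appears, but you still need the case-by-case evaluation to establish non-vanishing; as written, that step is asserted rather than argued.
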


\begin{table}[tp]
  \centering
  \begin{tabular}{LC}
    \toprule
    \h_1             & de^i=\sum_{j,s}(e_i\hook \varpi_j)\w w^s,\, dw^s=0                                  \\
    \midrule
    \h_2             & de^i=0,\, dw^s=w_s\hook w^{123}                                                     \\
    \midrule
    \h_3             & de^i=0,\, dw^s=\omega_s                                                              \\
    \midrule
    \h_4             & de^i=\sum_s(e_i\hook\omega_s)\w w^s,\, dw^s=-2w_s\hook w^{123}                        \\
    \midrule
    \h_5^a	     & de^1=0=de^2, de^3=e^{14}, de^4=-e^{13},                                               \\
                     & dw^1=a\omega_1, dw^2=a\omega_2+e^{1}\wedge w^3,	dw^3=a\omega_3-e^{1}\wedge w^2       \\
    \midrule
    \h_6^a	     & de^1=0=de^2, de^3=e^{14},de^4=-e^{13},                                                \\
                     &dw^1=aw^{23}, dw^2=aw^{31}+e^{1}\w w^3, dw^3=aw^{12}-e^{1}\w w^2                       \\
    \midrule
    \h_7^{a,\kappa}    & de^1=0,de^2=e^{12},de^3=e^{13}+\kappa e^{14}, de^4=e^{14}-\kappa e^{13},               \\
                     &dw^s = a\omega_s-\tfrac1aw_s\hook w^{123}+(\kappa e^1-e^2) \wedge w_s\hook w^{23}      \\    
                     & \hspace{2mm}-e^3\wedge w_s\hook w^{31}-e^4\wedge w_s\hook w^{12}\\
    \midrule
    \h_8^{a,\kappa}  & de^1=0,\, de^2=e^{12}+\kappa e^{13},\, de^3=-\kappa e^{12}+e^{13},\, de^4=2\varpi_3,                  \\
                     &dw^s = a\omega_s - \tfrac2a w_s\hook w^{123} +(\kappa e^1-e^4)\wedge w_s\hook w^{12}\\
                     & \hspace{2mm} -2e^3\wedge w_s\hook w^{31} - 2e^2\wedge w_s\hook w^{23}\\
    \bottomrule
  \end{tabular}
  \caption{The Lie algebras \( \h_i \), listed above, all admit an \( \SO(4) \)-structure with invariant intrinsic torsion. In each case, \( e^1,\ldots,e^4,w^1,w^2,w^3 \) is an adapted coframe, and the \( 2 \)-forms \( \omega_s\), \(\varpi_s \) are defined in \eqref{eqn:omegavarpi}. Parameters appearing as denominators are implicitly assumed not to be zero.}
   \label{table:invtorliealg}
\end{table}

\section{Dimensional reduction}
\label{sec:SO4str}

The key ingredient in our construction of harmonic \( \Sp(2)\Sp(1) \)-structures is a ``dimensional reduction'' in the sense that we break down the \( 8 \)-dimensional
geometry into \( 7+1 \) dimensions and identify a suitable subgroup 
\begin{equation*} 
  \SO(4)\cong\SO(7)\cap\Sp(2)\Sp(1) 
\end{equation*} 
that characterises the \( 7 \)-dimensional building blocks. 

In terms of representation theory, it is useful to keep in mind the isomorphism 
\begin{equation*} 
  \SO(4)\cong \Sp(1)\times_{\mathbb Z\slash2\mathbb Z}\Sp(1)=\Sp(1)\Sp(1). 
\end{equation*}
Then it is clear that the real irreducible representations of \( \SO(4) \)
take the form
\begin{equation*} 
  \Smod^{p,q}:=[S^pH_+\otimes S^qH_-],
\end{equation*}
where \( p+q \) is an even number, \( \dim\Smod^{p,q} =(p+1)(q+1) \), and the square brackets, \([V]\), are used to indicate that we are considering the real vector space underlying
\( V \). 

The \( 7 \)-dimensional representation of \( \SO(4) \) can now be expressed as 
\begin{equation*} 
  T=\bR^7\cong\bR^4\oplus\Lambda^2_-=\Smod^{1,1}\oplus \Smod^{2,0}. 
\end{equation*}

For later computations, it is worth recalling (see, e.g., \cite[Theorem~1.3]{Salamon:TopicsInFour}) that the tensor product of \( \SO(4) \) representations can be worked out via the formula
\begin{equation*} 
  \Smod^{p,q}\otimes \Smod^{r,s}\cong\bigoplus_{m,n=0}^{\min(p,r),\min(q,s)}\Smod^{p+r-2m,q+s-2n}. 
\end{equation*}

Let \( e_1,\cdots,e_7 \) denote the standard basis of \(
\bR^7 \) and \( e^1,\ldots,e^7 \) the dual basis of
\( T^* \). Via the inclusion \( \SO(4)\subset\SO(7) \) we obtain splittings 
\begin{equation*}
T^* \cong(\bR^4)^*\times(\bR^3)^*=\Lambda^{1,0}\oplus\Lambda^{0,1}, \quad \Lambda^kT^*\cong \bigoplus_{p+q=k}\Lambda^{p,q}.
\end{equation*}
In particular, we can naturally identify \( \Span{e^1,e^2,e^3,e^4} \) with \( (\mathbb R^4)^* \subset T^*\), and so forth. Consider now the pair of forms \( (\alpha,\beta)\in\Lambda^3 T^*\times\Lambda^4 T^* \) given by
\begin{equation} 
  \label{eq:defforms}
  \begin{gathered}
    \alpha= \omega_1 \w w^1+\omega_2 \w w^2+\omega_3\w w^3-3w^{123},\\
    \beta=-\omega_1\w w^{23}-\omega_2\w w^{31}-\omega_3\w
     w^{12}-3e^{1234},
   \end{gathered}
\end{equation}
where we have fixed a basis of the space \(\Lambda^2(\bR^4)^*=
\Lambda^2_- \oplus \Lambda^2_+\) of the form
\begin{equation}
\label{eqn:omegavarpi}
\begin{split}
  \omega_1&=e^{12}-e^{34},\omega_2=e^{13}-e^{42},\omega_3=e^{14}-e^{23},   \\
\varpi_1&=e^{12}+e^{34},\varpi_2=e^{13}+e^{42},\varpi_3=e^{14}+e^{23}, 
\end{split}
\end{equation}
whilst \( w^s=e^{4+s}\), \( s=1,2,3 \), span the annihilator of \( \bR^4\subset\bR^4\times\bR^3 \). 

The forms \( \alpha \) and \( \beta \) are stable in the sense that
the associated \( \GL(7,\bR) \)-orbits in \( \Lambda^* T^* \) are
open. Straightforward computations show:

\begin{lemma}
  \label{lem:stabg2*}
  The stabilisers of \( \alpha \) and \( \beta \) in \( \lie{gl}(7,\bR) \)
  are two different copies of \( \mathfrak{g}_2^* \) that intersect in
  a copy of \( \so(4) \).
\end{lemma}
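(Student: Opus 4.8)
The plan is to verify both assertions by direct computation in $\gl(7,\bR)$, exploiting the fact that the stabiliser of a stable form is determined by the infinitesimal condition $A\cdot\alpha=0$ (respectively $A\cdot\beta=0$), where the action of $A\in\gl(7,\bR)$ on a $k$-form is the usual derivation extending $(A\cdot\eta)(v)=-\eta(Av)$ on $T^*$. First I would write a general element $A=(A^i_j)\in\gl(7,\bR)$ and impose $A\cdot\alpha=0$, using the explicit expression for $\alpha$ in \eqref{eq:defforms} together with the basis $\omega_s,\varpi_s,w^s$ from \eqref{eqn:omegavarpi}. This yields a linear system on the $49$ entries $A^i_j$, and solving it identifies the stabiliser subalgebra as a $14$-dimensional subspace; the same procedure applied to $\beta$ gives a second $14$-dimensional subalgebra.

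The next step is to identify each of these $14$-dimensional subalgebras as a copy of $\g_2^*$. Since $\alpha$ and $\beta$ are stable $3$- and $4$-forms on a $7$-dimensional space, their $\GL(7,\bR)$-orbits are open, so their stabilisers are $14$-dimensional; the two possible isomorphism classes of the relevant stabiliser are the compact $\g_2$ and the split form $\g_2^*$, distinguished by the signature of the induced metric (or equivalently the Killing form). I would check that the quadratic form built from $\alpha$ via the standard Hitchin-type construction $v\mapsto(v\hook\alpha)\w(v\hook\alpha)\w\alpha$ has split signature, as noted in the introduction; this forces the stabiliser to be the non-compact real form $\g_2^*$ rather than $\g_2$. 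The same signature computation for $\beta$ confirms that its stabiliser is also a copy of $\g_2^*$.

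It then remains to show that the two copies are genuinely \emph{different} and that their intersection is $\so(4)$. The simplest route is computational: having explicit spanning matrices for $\Stab(\alpha)$ and $\Stab(\beta)$, I would exhibit an element of one that fails to stabilise the other form, establishing that the two subalgebras are distinct as subspaces of $\gl(7,\bR)$. For the intersection, I would solve the combined system $A\cdot\alpha=0=A\cdot\beta$; the solution space should be exactly the $\so(4)\cong\lie{sp}(1)\oplus\lie{sp}(1)$ acting on $T=\Smod^{1,1}\oplus\Smod^{2,0}$ as described in the reduction, a $6$-dimensional subalgebra. To confirm the isomorphism type, I would verify that this common stabiliser acts on $\bR^4=\Smod^{1,1}$ as the standard $\so(4)$ and correspondingly on $\Lambda^2_-=\Smod^{2,0}$, matching the embedding $\SO(4)\subset\SO(7)$ fixed at the start of the section.

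The main obstacle I anticipate is organisational rather than conceptual: the linear systems $A\cdot\alpha=0$ and $A\cdot\beta=0$ involve all $49$ entries of $A$ coupled through the wedge products, so keeping track of which monomials $e^{ijk}$ and $e^{ijkl}$ receive contributions is error-prone by hand. A judicious use of the block decomposition $T^*=\Lambda^{1,0}\oplus\Lambda^{0,1}$ and the self-dual/anti-self-dual splitting of $\Lambda^2(\bR^4)^*$ should decouple the system into manageable pieces and make the $\so(4)$-structure of the answer transparent, which is presumably why the authors describe the computation as "straightforward".
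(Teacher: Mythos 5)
Your proposal is correct, and its overall shape --- compute the two stabiliser subalgebras, identify each as \( \g_2^* \), then intersect --- matches the paper's. The step where you take a genuinely different route is the identification of the real form: you appeal to the split signature of the Hitchin form \( v\mapsto(v\hook\alpha)\w(v\hook\alpha)\w\alpha \), which is a clean orbit-theoretic criterion given that a stable \( 3 \)-form on \( \bR^7 \) has stabiliser \( \g_2 \) or \( \g_2^* \) according to whether that form is definite or split. The paper never solves the raw \( 49 \)-variable system: it decomposes \( \End T\cong\bR\oplus S^2_0T\oplus\Lambda^2T \) into \( \SO(4) \)-irreducibles, exhibits the copy of \( \so(4) \) in \eqref{eq:so4stab} explicitly and checks that it annihilates both forms, and then observes that the remaining \( 8 \) dimensions of each stabiliser form a copy of \( \Smod^{1,3} \) \emph{not} contained in \( \so(7) \); this is its substitute for your signature test, and since the two copies of \( \Smod^{1,3} \) are different it simultaneously yields that the intersection is exactly the common \( \so(4) \), with no second large linear solve. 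Your closing remark about decoupling the system via the \( \Lambda^{1,0}\oplus\Lambda^{0,1} \) and self-dual/anti-self-dual splittings is essentially a rediscovery of this representation-theoretic organisation; pushing it to the level of Schur's lemma, as the paper does, is what makes the computation genuinely ``straightforward''. One detail to repair in your write-up: for the \( 4 \)-form \( \beta \) the expression \( (v\hook\beta)\w(v\hook\beta)\w\beta \) is a \( 10 \)-form on a \( 7 \)-dimensional space and vanishes identically, so ``the same signature computation'' needs the appropriate Hitchin invariant for stable \( 4 \)-forms (or one can simply note \( \beta=\Hodge\alpha \) for the induced split-signature metric); the conclusion is unaffected.
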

\begin{proof}
We have the well-known decomposition \( \End T\cong\bR\oplus S^2_0T\oplus\Lambda^2T \), where
\begin{equation*}
  S^2_0T\cong\bR\oplus\mathcal{S}^{1,1}\oplus\Smod^{2,2}\oplus\Smod^{0,4}\oplus\Smod^{1,3},\, \Lambda^2T\cong\so(4)\oplus\Smod^{0,2}\oplus\Smod^{1,1}\oplus\Smod^{1,3}.
\end{equation*}

The stabilisers of \( \alpha \) and \( \beta \) are computed with respect to the action that identifies \( \Lambda^2T^* \) with \( \so(7) \)
by letting  \( \Lambda^2T^* \) act on \( T \) as 
\begin{equation}
\label{eq:soaction}
   \Lambda^2T^*\times T \ni (\omega,v)\mapsto v\hook\omega, 
\end{equation}   
meaning \( e^{12} \) is identified with \( e^1\otimes e_2- e^2\otimes e_1 \), and so forth.

Straightforward computations, using \eqref{eq:soaction}, now show that the copy 
\begin{equation}
  \label{eq:so4stab}
  \so(4)\cong\Span{\varpi_1,\varpi_2,\varpi_3}\oplus\Span{\omega_1-2w^{23},\omega_2-2w^{31},\omega_3-2w^{12}}
\end{equation}
annihilates \( \alpha \) and \( \beta \).

In addition to this copy of \( \so(4) \), each of the forms \( \alpha \) and \( \beta \) is annihilated by a copy of the module \( \Smod^{1,3} \); neither of these (different) copies of \( \Smod^{1,3} \) is contained in \( \so(7) \). In particular, we conclude that each form has stabiliser \( \g_2^* \) rather than \( \g_2 \).
\end{proof}

\begin{remark}
Starting from an \( \SO(4) \)-structure \( (\alpha,\beta) \), with associated metric \( g=g^{\mathcal H}+g^{\mathcal V} \), we may use the splitting of \( T\) into \( \bR^4\times\bR^3 \) 
so as to obtain a family of metrics. Up to conformal transformation, this family is obtained by rescaling in the fibre directions \( \mathcal V \), meaning
\( w^s\mapsto\lambda w^s=:\tilde{w}^s\),
where \( \lambda>0 \). The metric \( \tilde g \) and defining forms, \( (\tilde\alpha,\tilde\beta) \), associated with this rescaling are:
\begin{equation*}
\begin{gathered}
\tilde g=g^{\mathcal H}+\lambda^2g^{\mathcal V},\,
   \tilde\alpha= \lambda(\omega_1 \w w^1+\omega_2 \w w^2+\omega_3\w w^3)-3\lambda^3w^{123},\\
    \tilde\beta=\lambda^2(-\omega_1\w w^{23}-\omega_2\w w^{31}-\omega_3\w
    w^{12})-3e^{1234}.
\end{gathered}
\end{equation*}
\end{remark}

From the pair \( (\alpha,\beta) \) we construct a \( 4 \)-form \( \Omega \) on \( \bR^8\cong\Span{e_8}\oplus T \) as follows:
\begin{equation}
    \label{eq:quat4form}
    \Omega=\alpha\w e^8+\beta=\tfrac12(\sigma_1^2+\sigma_2^2+\sigma_3^2),
\end{equation}
where
\begin{equation}
\label{eq:sp2forms}
\sigma_1=\omega_1+e^{58}-e^{67},\,\sigma_2=\omega_2+e^{68}-e^{75},\,\sigma_3=\omega_3+e^{78}-e^{56}.
\end{equation}
It is well known \cite[Lemma 9.1]{Salamon:RiemannianGeometryAnd} that \( \Omega \) has stabiliser \( \Sp(2)\Sp(1) \) in  \( \GL(8,\bR) \).

Conversely, consider a manifold \( M \) endowed with an \( \Sp(2)\Sp(1) \)-structure whose associated fundamental form is given by
\( 2\Omega=\sigma_1^2+\sigma_2^2+\sigma_3^2 \). Let \( \iota\colon\, N\to M \) be an oriented hypersurface, with induced metric \( g \), and suppose the unit normal direction is given by \( n=-e_8 \). Then
\begin{equation*} 
  \beta=\iota^*\Omega=-(e^{12}-e^{34})\w e^{67}-(e^{13}-e^{42})\w e^{75}-(e^{14}-e^{23})\w e^{56}-3e^{1234}, 
\end{equation*}
and its Hodge star is

\begin{equation*} 
  \alpha=\iota^*(n\hook\Omega)= (e^{12}-e^{34})\w e^5+(e^{13}-e^{42})\w e^6+(e^{14}-e^{23})\w e^7-3e^{567}. 
\end{equation*}

Consequently, \( N \) inherits an \( \SO(4) \)-structure which is determined by fixing any two of \( \alpha \), \( \beta \), and \( g \). 

As already mentioned, the \( 7 \)-dimensional representation of \( \SO(4) \) can be written as \( \bR^4\oplus\Lambda^2_-=\Smod^{1,1}\oplus\Smod^{2,0} \), and in the proof of Lemma \ref{lem:stabg2*}, we used this
fact to write: 
\begin{equation*}
  \Lambda^2 T\cong2\Smod^{0,2} \oplus \Smod^{2,0}\oplus\Smod^{1,1}\oplus\Smod^{1,3}\cong \Lambda^5 T.
\end{equation*}

Another straightforward computation shows that
\begin{equation}
  \label{eq:ext-dec}
  \Lambda^3 T\cong2\bR\oplus\Smod^{0,2}\oplus\Smod^{0,4}\oplus 2\Smod^{1,1}\oplus\Smod^{1,3}\oplus\Smod^{2,2}\cong \Lambda^4T. 
\end{equation}
In particular, the space of invariant forms \( \left(\Lambda^*T^*\right)^{\SO(4)} \) is \( 4 \)-dimensional. Indeed, the decomposable forms 
\(\upsilon=e^{1234} \) and \( \Hodge\upsilon=w^{123} \) are clearly invariant.

The pair of forms 
\( \alpha+4\Hodge\upsilon, \beta+4\upsilon\in \left(\Lambda^*T^*\right)^{\SO(4)} \)
are distinguished by having stabiliser the same copy of \( \g_2\subset\so(7) \); this
is an alternative way to phrase the fact that given any \( \SO(4) \)-structure, 
the inclusion \( \SO(4)\subset \G_2 \) determines a natural \( \G_2 \)-structure.

On a \(7\)-manifold with an \(\SO(4)\)-structure, defined as above, \( \SO(7) \) acts on the pair \( (\alpha,\beta) \) with stabiliser  \( \SO(4) \). Differentiation therefore gives a map 
\begin{equation*} 
  \so(7) \to\Lambda^3T^*\oplus\Lambda^4T^*,\, A\mapsto A\cdot\alpha+A\cdot\beta 
\end{equation*}
whose kernel is \( \so(4) \). In particular, we have an inclusion of the orthogonal complement of \( \so(4) \) in \( \so(7) \):
\begin{equation*} 
  \so(4)^\perp\hookrightarrow\Lambda^3T^*\oplus\Lambda^4T^*,\, A\mapsto A\cdot \alpha+ A\cdot\beta.
\end{equation*}

The intrinsic torsion \( \xi \) of an \( \SO(4) \)-structure takes values in the \( 105 \)-dimensional space
\begin{equation*}
T^*\otimes \so(4)^\perp \cong 2\bR\oplus 3\Smod^{0,2}\oplus 2\Smod^{0,4} \oplus 3\Smod^{1,1}\oplus 3\Smod^{1,3}\oplus\Smod^{1,5} \oplus\Smod^{2,0}\oplus 2\Smod^{2,2} \oplus\Smod^{2,4},
\end{equation*}
and, due to the above inclusion, the map 
\begin{equation}
\label{eq:intriso}
 \xi\mapsto\LC\alpha+\LC\beta=\xi\cdot\alpha+\xi\cdot\beta 
\end{equation}
 is an isomorphism.

If we let \( \mathbf a \) denote the alternation map \( T^*\otimes\Lambda^rT^*\to\Lambda^{r+1}T^* \), we have the well-known relation \( d=\alt\circ\LC \).  Taking into account the isomorphism \eqref{eq:intriso}, one expects that the exterior derivatives of \( \alpha \) 
and \( \beta \) could encode valuable information about \( \xi \). Indeed, a computation shows that \( (d\alpha,d\beta) \) encodes the maximal possible amount of
information in the following sense.
 
\begin{lemma}
The map
\begin{equation}
  \label{eqn:intto}
  T^*\otimes \so(7)\to\Lambda^4T^*\oplus \Lambda^5T^*, \, e^i\otimes A\mapsto e^i\w A\cdot\alpha + e^i\w A\cdot\beta  
\end{equation}
is surjective.
\end{lemma}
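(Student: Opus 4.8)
The plan is to exploit \(\SO(4)\)-equivariance and reduce surjectivity to a finite collection of rank conditions. The map \eqref{eqn:intto} is manifestly \(\SO(4)\)-equivariant: the forms \(\alpha\) and \(\beta\) are \(\SO(4)\)-invariant, and the \(\so(7)\)-action \eqref{eq:soaction}, the wedge product, and the implicit alternation \(\alt\) are all equivariant. Since the copy of \(\so(4)\) in \eqref{eq:so4stab} annihilates both \(\alpha\) and \(\beta\), the subspace \(T^*\otimes\so(4)\) lies in the kernel, so \eqref{eqn:intto} factors through a map \(\bar\Phi\colon T^*\otimes\so(4)^\perp\to\Lambda^4T^*\oplus\Lambda^5T^*\), and surjectivity of \eqref{eqn:intto} is equivalent to surjectivity of \(\bar\Phi\). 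Under the identification \eqref{eq:intriso} and the relation \(d=\alt\circ\LC\), the map \(\bar\Phi\) is exactly \(\xi\mapsto d\alpha+d\beta\); this is the precise sense in which \((d\alpha,d\beta)\) is to carry the maximal amount of torsion information.

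Next I would apply Schur's lemma. Each \(\Smod^{p,q}\) is absolutely irreducible — the condition that \(p+q\) be even makes \(S^pH_+\otimes S^qH_-\) of real type — so \(\End_{\SO(4)}\Smod^{p,q}=\bR\) and an equivariant map between two isotypic components of the same type is encoded by a genuine real matrix of Schur coefficients. The relevant decompositions are already recorded in the excerpt: the source \(T^*\otimes\so(4)^\perp\) carries the types \(\bR,\Smod^{0,2},\Smod^{0,4},\Smod^{1,1},\Smod^{1,3},\Smod^{1,5},\Smod^{2,0},\Smod^{2,2},\Smod^{2,4}\) with multiplicities \(2,3,2,3,3,1,1,2,1\), while \(\Lambda^4T^*\oplus\Lambda^5T^*\) decomposes, using \eqref{eq:ext-dec} together with \(\Lambda^5T^*\cong\Lambda^2T^*\), as \(2\bR\oplus3\Smod^{0,2}\oplus\Smod^{0,4}\oplus3\Smod^{1,1}\oplus2\Smod^{1,3}\oplus\Smod^{2,0}\oplus\Smod^{2,2}\). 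Surjectivity of \(\bar\Phi\) then amounts to the requirement that, for each type occurring in the target, the associated Schur matrix has rank equal to the target multiplicity; the types \(\Smod^{1,5}\) and \(\Smod^{2,4}\) occur only in the source and so merely enlarge the kernel.

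Finally I would verify these rank conditions by direct computation with the explicit \(\alpha\) and \(\beta\) of \eqref{eq:defforms} and the action \eqref{eq:soaction}: by equivariance it suffices, for each target type, to evaluate \(\bar\Phi\) on representatives of the source copies and to check that the outputs span the target isotypic component. The loose cases, where the source multiplicity strictly exceeds the target one — namely \(\Smod^{0,4}\), \(\Smod^{1,3}\) and \(\Smod^{2,2}\) — only require producing enough hitting elements. The tight cases, where the two multiplicities agree, are more rigid: for \(\Smod^{2,0}\) (multiplicity \(1\)) it suffices that \(\bar\Phi\) be nonzero, whereas for the trivial summand \(\bR\) (a \(2\times2\) block), for \(\Smod^{0,2}\) (a \(3\times3\) block) and for \(\Smod^{1,1}\) (a \(3\times3\) block) one must show that a square matrix of Schur coefficients is nonsingular, with no dimensional slack. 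I expect these nonsingularity checks to be the main obstacle, since they are the only steps that genuinely depend on the numerical coefficients of \(\alpha\) and \(\beta\) rather than on dimension counts. The trivial summand is the most tractable: its two target copies are spanned by \(\upsilon=e^{1234}\) and \(\beta\) inside \(\Lambda^4T^*\), so one only needs the images of the two invariant elements of \(T^*\otimes\so(4)^\perp\) to be linearly independent. As a fallback, the whole statement can instead be settled by a single explicit (ideally computer-assisted) computation: evaluate \eqref{eqn:intto} on a basis of \(T^*\otimes\so(7)\) and row-reduce the resulting array to confirm that its rank equals \(\dim(\Lambda^4T^*\oplus\Lambda^5T^*)=56\).
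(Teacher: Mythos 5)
Your argument is sound and reaches the result by a genuinely different route from the paper. You stay entirely in seven dimensions: you factor the map through \( T^*\otimes\so(4)^\perp \), decompose source and target into \( \SO(4) \)-isotypic components (your multiplicity counts agree with the paper's and correctly predict the \( 49 \)-dimensional kernel \( \Smod^{0,4}\oplus\Smod^{1,3}\oplus\Smod^{1,5}\oplus\Smod^{2,2}\oplus\Smod^{2,4} \)), and reduce surjectivity to nonsingularity of a handful of Schur matrices; your observation that \( p+q \) even forces \( \Smod^{p,q} \) to be of real type, so that the Schur blocks are honest real matrices, is exactly the right justification for this reduction. The paper instead passes to eight dimensions: it identifies the map with the restriction to \( T^*\otimes\so(7) \) of \( \tilde T^*\otimes\so(8)\to\Lambda^5\tilde T^* \), \( e^i\otimes A\mapsto e^i\w A\cdot\Omega \), invokes Swann's theorem that this is surjective with kernel \( KS^3H\oplus\tilde T^*\otimes(\spto) \), and thereby reduces the lemma to surjectivity of a composition \( \alt\circ\pi\colon T^*\otimes\so(7)\to\Lambda^3\tilde T^* \), which it verifies on the pieces of \( \so(7)=\so(4)\oplus\so(3)\oplus\bR^4\otimes\bR^3 \). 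The paper's route buys a cleaner, more geometric case analysis at the cost of importing the eight-dimensional result; yours is self-contained and makes the kernel completely transparent, at the cost of more bookkeeping. The one caveat is that essentially all the content of the lemma sits in the rank checks you defer --- in particular the nonsingularity of the \( 2\times2 \) block on the trivial summand and of the two \( 3\times3 \) blocks on \( \Smod^{0,2} \) and \( \Smod^{1,1} \) --- so as written your text is a correct reduction of the lemma to a finite, well-posed linear-algebra computation rather than a finished proof; carrying out those evaluations (or your fallback rank-\( 56 \) row reduction) is what remains.
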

\begin{proof}
Set \( \tilde T:=T\oplus\Span{e_8} \), and write \( \Omega=\alpha\w e^8+\beta \). The map appearing in the statement  can be identified with the restriction to \( T^*\otimes \so(7) \) of
\begin{equation*} 
  \tilde T^*\otimes \so(8)\to \Lambda^5\tilde T^*, \, e^i\otimes A\mapsto e^i\w A\cdot\Omega.
\end{equation*}

By \cite{Swann:AspectsSymplectiquesDe}, this map is  surjective with kernel \( KS^3H\oplus\tilde T^*\otimes(\spto) \),
and the component \( KS^3H \) can be identified with the kernel  of the skew-symmetrisation map 
\(\alt\colon \tilde T^*\otimes (\spto)^\perp \to  \Lambda^3\tilde T^* \).
It therefore suffices to prove that the composition
\begin{equation*}
  T^*\otimes\so(7)\xrightarrow{\pi} \tilde T^*\otimes (\spto)^\perp\xrightarrow{\alt} \Lambda^3\tilde T^*
\end{equation*}
is surjective. 

If we write \( \so(7) \) as 
\begin{equation*}
  \so(7)=\so(4) \oplus \so(3)\oplus\bR^4\otimes \bR^3,\, \so(3)=\Span{\omega_1+2e^{67},\omega_2+2e^{75},\omega_3+2e^{56}},
\end{equation*}
we see that the projection \( \pi \) is zero on \( T^*\otimes\so(4) \) and is the inclusion on \( T^*\otimes\so(3) \). On \( T^*\otimes \bR^4\otimes \bR^3 \), it  has the form
\begin{equation*}
  e^c\otimes e^{ab}\mapsto e^c\otimes \tfrac14(3e^{ab}-e_a\hook\sigma_s\w e_b\hook\sigma_s).
\end{equation*}

Clearly, the restriction of \( \alt\circ\pi \) to \( \Span{e^8}\otimes \bR^4\otimes \bR^3 \) is injective, with image \( e^8\w \Lambda^{1,1} \). It follows that 
\begin{equation*} 
  \alt\circ\pi\colon (\Span{e^8}\oplus\Lambda^{0,1})\otimes\bR^4\otimes \bR^3\to \Lambda^{1,2}\oplus e^8\w\Lambda^{1,1}
\end{equation*}
is surjective. Likewise, 
\begin{equation*}
  \alt\circ\pi\colon \Lambda^{1,0}\otimes\bR^4\otimes \bR^3\to \Lambda^{2,1}\oplus e^8\w \Lambda^{2,0}
\end{equation*} is an isomorphism. We conclude that  \( \alt\circ\pi \)  is surjective by considering its restrictions to  \( \Lambda^{1,0}\otimes\so(3) \) and \( (\Span{e^8}\oplus\Lambda^{0,1})\otimes \so(3)\).  
\end{proof}

As a consequence of the above observation, \( d\alpha \) and \( d\beta \) can only both be zero if \( \xi \) takes values in the modules
\begin{equation*}
  \Smod^{0,4}\oplus\mathcal{S}^{1,3}\oplus\Smod^{1,5}\oplus\Smod^{2,2}\oplus\Smod^{2,4}.
\end{equation*}

In summary:

\begin{proposition}
The conditions \( d\alpha=0=d\beta \) characterise \( \SO(4) \)-structures whose intrinsic torsion takes values in the \( 49 \)-dimensional submodule 
\begin{equation*}
  \Wmod= \Wmod^{0,4} \oplus\Wmod^{1,3}\oplus\Wmod^{1,5} \oplus\Wmod^{2,2}\oplus\Wmod^{2,4}
\end{equation*}
with each component being isomorphic to the \( \SO(4) \)-module  \( \Smod^{p,q} \) with the same indices.

\end{proposition}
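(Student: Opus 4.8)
The plan is to realise the pair $(d\alpha,d\beta)$ as the image of the intrinsic torsion under the map already treated in the preceding Lemma, and then to extract its kernel by representation-theoretic bookkeeping. Writing $\xi=\sum_i e^i\otimes\xi_i$ with $\xi_i\in\so(4)^\perp$, the relation $d=\alt\circ\LC$ together with the isomorphism \eqref{eq:intriso} gives $d\alpha=\alt(\LC\alpha)=\sum_i e^i\w\xi_i\cdot\alpha$, and likewise $d\beta=\sum_i e^i\w\xi_i\cdot\beta$. Hence $\xi\mapsto(d\alpha,d\beta)$ is exactly the restriction to $T^*\otimes\so(4)^\perp$ of the map \eqref{eqn:intto}. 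As $\so(4)$ annihilates both $\alpha$ and $\beta$, the map \eqref{eqn:intto} vanishes on $T^*\otimes\so(4)$, so this restriction $\Phi\colon T^*\otimes\so(4)^\perp\to\Lambda^4T^*\oplus\Lambda^5T^*$ has the same image as \eqref{eqn:intto} and is therefore surjective by the Lemma. Consequently $d\alpha=0=d\beta$ if and only if $\xi\in\ker\Phi$, and a rank count gives $\dim\ker\Phi=105-(35+21)=49$.

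To identify $\ker\Phi$ as an $\SO(4)$-module I would first assemble the target. Using the metric isomorphisms $\Lambda^kT^*\cong\Lambda^kT$, the decomposition of $\Lambda^4T$ recorded in \eqref{eq:ext-dec}, and that of $\Lambda^5T$ (equal to the decomposition of $\Lambda^2T$ recalled above), one obtains
\[
  \Lambda^4T^*\oplus\Lambda^5T^*\cong 2\bR\oplus3\Smod^{0,2}\oplus\Smod^{0,4}\oplus3\Smod^{1,1}\oplus2\Smod^{1,3}\oplus\Smod^{2,0}\oplus\Smod^{2,2}.
\]
Comparing with the decomposition of the source $T^*\otimes\so(4)^\perp$ displayed above, every irreducible occurring in the target does so with multiplicity no greater than in the source.

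Since $\SO(4)$ is compact, its category of real representations is semisimple, so the equivariant surjection $\Phi$ splits as $T^*\otimes\so(4)^\perp\cong\ker\Phi\oplus(\Lambda^4T^*\oplus\Lambda^5T^*)$. The multiplicity of each real irreducible $\Smod^{p,q}$ in $\ker\Phi$ is therefore its multiplicity in the source minus its multiplicity in the target. Carrying out this subtraction leaves
\[
  \ker\Phi\cong\Smod^{0,4}\oplus\Smod^{1,3}\oplus\Smod^{1,5}\oplus\Smod^{2,2}\oplus\Smod^{2,4},
\]
of dimension $5+8+12+9+15=49$, which is exactly the submodule $\Wmod$ claimed.

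The step I expect to demand the most care is the bookkeeping for the target decomposition $\Lambda^4T^*\oplus\Lambda^5T^*$, since the kernel is pinned down entirely by subtracting multiplicities and any miscount there would propagate directly into the answer. The geometric content, by contrast, is concentrated in the surjectivity of \eqref{eqn:intto} established in the preceding Lemma; once that is granted, semisimplicity and the dimension count force the identification of $\Wmod$.
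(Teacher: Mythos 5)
Your proposal is correct and follows essentially the same route as the paper: the paper likewise deduces the proposition from the surjectivity of the map \eqref{eqn:intto} (which factors through $T^*\otimes\so(4)^\perp$ since $\so(4)$ annihilates $\alpha$ and $\beta$), identifying the kernel by comparing the decomposition of $T^*\otimes\so(4)^\perp$ with that of $\Lambda^4T^*\oplus\Lambda^5T^*$ obtained from \eqref{eq:ext-dec} and $\Lambda^5T\cong\Lambda^2T$. Your multiplicity bookkeeping and the dimension count $105-56=49$ agree with the paper's.
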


We shall say an \( \SO(4) \)-structure is \emph{harmonic} if the forms \( \alpha \) and \( \beta \) are closed. This terminology reflects the fact that,
as  \( \alpha=\Hodge\beta \), the conditions \( d\alpha=0=d\beta \) imply that \( \alpha \) and
\( \beta \) are harmonic forms.
 
\begin{remark}
It is curious that the expression \eqref{eq:quat4form} of \( \Omega \) resembles that of a \( 4 \)-form defining a \( \rm{Spin}(7) \)-structure: 
\begin{equation*}
\begin{split}
\Phi&=\tfrac12(\sigma_1^2+\sigma_2^2-\sigma_3^2)\\
&=((-e^{14}+e^{23}-e^{56})\wedge e^7+(e^{125}+e^{136}+e^{246}-e^{345}))\wedge e^8\\
&\qquad+\tfrac12(-e^{14}+e^{23}-e^{56})^2+(-e^{126}+e^{135}+e^{245}+e^{346})\wedge e^7\\
&=\tfrac12(-e^{14}+e^{23}-e^{56})^2+(-e^{126}+e^{135}+e^{245}+e^{346})\wedge e^7\\
&\qquad+(-e^{14}+e^{23}-e^{56})\wedge e^{78}+(e^{125}+e^{136}+e^{246}-e^{345})\wedge e^8.
\end{split}
\end{equation*}
In the above, the second expression of \( \Phi \) emphasizes the inclusion \( \G_2\subset\rm{Spin}(7) \) (fixing a normal direction \( e_8 \)) and the last one is related to \( \SU(3)\subset \G_2 \) (fixing additionally the direction \( e_7 \)); note that each of these rewritings also involves stable forms. In contrast with \( \rm{Sp}(2)\rm{Sp}(1) \), however, an attempt to use the dimensional reduction to seven or six dimensions as a means of solving \( d\Phi=0 \) (by imposing closedness of the involved stable forms) is extremely restrictive and leads to holonomy reductions \( \G_2\subset\rm{Spin}(7) \) and \( \SU(3)\subset\rm{Spin}(7) \), respectively.
\end{remark}

\bigskip

There are obvious obstructions to the existence of a harmonic \( \SO(4) \)-structure on a  \( 7 \)-manifold:

\begin{proposition}
If \( N \) is a \( 7 \)-manifold with an \( \SO(4) \)-structure, then \( N \) is spin. If, in addition, \( N \) is compact and the structure is harmonic then \( H^3 (N)\) and \( H^4(N) \) are non-trivial. 
\end{proposition}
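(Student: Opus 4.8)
The plan is to dispatch the two claims separately, each reducing to a standard fact once the appropriate reduction of structure group is identified. For the spin assertion I would invoke the inclusion \( \SO(4)\subset\G_2 \) established just before the statement: since \( \so(4) \) fixes the \( \SO(4) \)-invariant forms \( \upsilon \) and \( \Hodge\upsilon \), it annihilates \( \alpha+4\Hodge\upsilon \) and \( \beta+4\upsilon \), whose common stabiliser is the \emph{compact} \( \g_2\subset\so(7) \); hence any \( \SO(4) \)-structure on \( N \) refines the underlying \( \SO(7) \)-structure to a \( \G_2 \)-structure. As \( \G_2 \) is simply connected, the inclusion \( \G_2\subset\SO(7) \) lifts through the double cover to \( \G_2\subset\mathrm{Spin}(7) \); pulling this lift back along the \( \G_2 \)-reduction yields a \( \mathrm{Spin}(7) \)-structure, i.e. a spin structure, on \( N \). (In particular \( N \) is oriented, which I use below.)

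For the second claim I would use that a harmonic structure has \( d\alpha=0=d\beta \), so \( \alpha \) and \( \beta \) define classes \( [\alpha]\in H^3(N) \), \( [\beta]\in H^4(N) \), together with the pointwise identity \( \alpha=\Hodge\beta \). The key computation is \( \alpha\w\beta=\alpha\w\Hodge\alpha=|\alpha|^2\,\mathrm{vol} \); since \( \alpha \) is one of the defining forms of the structure it is nowhere zero, so \( |\alpha|^2>0 \) everywhere. Integrating over the compact oriented manifold \( N \) gives \( \int_N\alpha\w\beta=\int_N|\alpha|^2\,\mathrm{vol}>0 \), whence the cup product \( [\alpha]\smile[\beta]=[\alpha\w\beta] \) is nonzero in \( H^7(N) \). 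A nonzero cup product forces each factor to be nonzero, so \( H^3(N) \) and \( H^4(N) \) are both non-trivial.

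Equivalently, one can argue by Hodge theory, in keeping with the paper's terminology: \( d\alpha=0 \) and \( d\Hodge\alpha=d\beta=0 \) make \( \alpha \) a nowhere-zero harmonic \( 3 \)-form, and likewise \( \beta \) a harmonic \( 4 \)-form, and on a compact manifold a nonzero harmonic form represents a nonzero de Rham class. I expect no serious obstacle in either part; the only point needing care is the spin assertion, where one must ensure that the relevant exceptional group is the compact \( \G_2 \) sitting inside \( \SO(7) \) — not the split form \( \g_2^* \) of Lemma \ref{lem:stabg2*}, which does not lie in \( \so(7) \) and would not lift to \( \mathrm{Spin}(7) \). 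Routing the argument through this compact \( \G_2 \) avoids a direct Stiefel--Whitney computation for the embedding \( \SO(4)\hookrightarrow\SO(7) \) afforded by \( \Smod^{1,1}\oplus\Smod^{2,0} \).
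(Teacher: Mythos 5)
Your proposal is correct and follows essentially the same route as the paper: the spin assertion via the inclusion \( \SO(4)\subset\G_2 \) (the paper's compact \( \G_2 \)-reduction determined by \( \alpha+4\Hodge\upsilon \) and \( \beta+4\upsilon \)), and the cohomological assertion via the observation that \( \alpha\w\beta \) is a volume form, so neither closed form can be exact. Your extra care in distinguishing the compact \( \G_2 \) from the split stabilisers \( \g_2^* \) of Lemma \ref{lem:stabg2*} is exactly the right point to flag, but it does not change the argument.
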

\begin{proof}
The first assertion follows  since \( \SO(4) \) is contained in \( \G_2 \). The statement concerning harmonic structures follows by observing that \( \alpha\w\beta \) is a volume form; in particular, neither \( \alpha \) nor \( \beta \) can be exact. 
\end{proof}

\begin{proposition}
For a  harmonic \( \SO(4) \)-structure, one can identify \( d\upsilon \) with the component \( \Wmod^{1,3} \) of the intrinsic torsion, and  \( d\Hodge\upsilon \) with the components \( \Wmod^{2,2}\oplus\Wmod^{0,4} \).
\end{proposition}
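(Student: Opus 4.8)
The plan is to use the identity $d=\alt\circ\LC$ together with the $\SO(4)$-invariance of $\upsilon$ and $\Hodge\upsilon$. Since any $\SO(4)$-connection annihilates an invariant form, the Levi-Civita derivatives $\LC\upsilon$ and $\LC\Hodge\upsilon$ are governed solely by the intrinsic torsion, $\LC\upsilon=\xi\cdot\upsilon$ and $\LC\Hodge\upsilon=\xi\cdot\Hodge\upsilon$, whence
\[ d\upsilon=\alt(\xi\cdot\upsilon),\qquad d\Hodge\upsilon=\alt(\xi\cdot\Hodge\upsilon), \]
with $\xi\in\Wmod$. Both $\xi\mapsto d\upsilon$ and $\xi\mapsto d\Hodge\upsilon$ are thus $\SO(4)$-equivariant maps, $\Wmod\to\Lambda^5T^*$ and $\Wmod\to\Lambda^4T^*$ respectively. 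By Schur's lemma a summand of $\Wmod$ can contribute only to a summand of the target of the same type. Since $\Wmod=\Wmod^{0,4}\oplus\Wmod^{1,3}\oplus\Wmod^{1,5}\oplus\Wmod^{2,2}\oplus\Wmod^{2,4}$, comparison with the decompositions of $\Lambda^5T^*$ and $\Lambda^4T^*$ shows that $d\upsilon$ can only detect $\Wmod^{1,3}$ (the single common type in $\Lambda^5T^*$), while $d\Hodge\upsilon$ could a priori detect $\Wmod^{0,4}$, $\Wmod^{1,3}$ and $\Wmod^{2,2}$.

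The crux is to rule out a contribution of $\Wmod^{1,3}$ to $d\Hodge\upsilon$, and for this I would exploit the bigrading $\Lambda^kT^*=\bigoplus_{p+q=k}\Lambda^{p,q}$. The key point is that the summand of $\so(4)^\perp$ acting nontrivially on the two volume forms is exactly the mixing part $\Lambda^{1,1}\cong\bR^4\otimes\bR^3$: the complementary $\so(3)\subset\so(4)^\perp$ sits inside $\Lambda^{2,0}\oplus\Lambda^{0,2}$ and hence annihilates both the horizontal volume $\upsilon=e^{1234}$ and the vertical volume $\Hodge\upsilon=w^{123}$. As an element of $\Lambda^{1,1}$ shifts bidegree by $(\pm1,\mp1)$, it follows that $\xi\cdot\upsilon\in T^*\otimes\Lambda^{3,1}$ and $\xi\cdot\Hodge\upsilon\in T^*\otimes\Lambda^{1,2}$, and after alternation
\[ d\upsilon\in\Lambda^{4,1}\oplus\Lambda^{3,2},\qquad d\Hodge\upsilon\in\Lambda^{2,2}\oplus\Lambda^{1,3}. \]

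Next I would locate the relevant irreducible types within these bidegree pieces. A direct check shows that the unique copy of $\Smod^{1,3}$ in $\Lambda^5T^*$ lies in $\Lambda^{3,2}$, whereas the unique copy of $\Smod^{1,3}$ in $\Lambda^4T^*$ lies in $\Lambda^{3,1}$; moreover $\Smod^{2,2}$ and $\Smod^{0,4}$ both occur in $\Lambda^{2,2}$. Because $d\upsilon$ reaches $\Lambda^{3,2}$ but $d\Hodge\upsilon$ is confined to $\Lambda^{2,2}\oplus\Lambda^{1,3}$ and so misses $\Lambda^{3,1}$, the type $\Smod^{1,3}$ is invisible to $d\Hodge\upsilon$. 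This is precisely the vanishing that removes $\Wmod^{1,3}$ from $d\Hodge\upsilon$, and it is obtained with no computation beyond bidegree counting. Combined with the Schur reduction, this forces $d\upsilon$ to be valued in $\Smod^{1,3}$ and $d\Hodge\upsilon$ in $\Smod^{0,4}\oplus\Smod^{2,2}$.

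It remains to promote these inclusions to the asserted identifications, i.e.\ to check that the three surviving equivariant maps $\Wmod^{1,3}\to\Lambda^{3,2}$, $\Wmod^{2,2}\to\Lambda^{2,2}$ and $\Wmod^{0,4}\to\Lambda^{2,2}$ are nonzero; as each relevant type occurs with multiplicity one in its target, nonzero forces an isomorphism onto the corresponding line. This is the step I expect to be the main obstacle, being genuinely computational rather than formal: one must verify that the harmonic copies $\Wmod^{1,3}$, $\Wmod^{2,2}$ and $\Wmod^{0,4}$ (singled out by $d\alpha=0=d\beta$) have nonzero component in the mixing part $T^*\otimes\Lambda^{1,1}$ of the torsion — the only part seen by $\upsilon$ and $\Hodge\upsilon$ — and that the ensuing alternation does not vanish. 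I would settle this by evaluating on a single highest-weight representative in each summand, or by appealing to the injectivity of the intrinsic-torsion isomorphism \eqref{eq:intriso}.
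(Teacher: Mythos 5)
Your proposal is correct and rests on the same skeleton as the paper's proof --- both treat \( d\upsilon \) and \( d\Hodge\upsilon \) as \( \SO(4) \)-equivariant functions of the intrinsic torsion and invoke Schur's lemma to restrict which summands of \( \Wmod \) they can detect --- but you handle the delicate exclusion step differently. The paper disposes of everything beyond Schur by computing the rank of the augmented map \( e^i\otimes A\mapsto e^i\w A\cdot\alpha+e^i\w A\cdot\beta+e^i\w A\cdot\upsilon \) (and its analogue for \( \Hodge\upsilon \)), which simultaneously shows that \( d\upsilon \) detects \( \Wmod^{1,3} \) and that \( d\Hodge\upsilon \) misses \( \Wmod^{1,3} \) while catching \( \Wmod^{0,4}\oplus\Wmod^{2,2} \). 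You instead rule out \( \Wmod^{1,3} \) from \( d\Hodge\upsilon \) by a purely formal bigrading count: since the \( \so(3) \) summand of \( \so(4)^\perp \) annihilates both volume forms, only the mixing part \( \Lambda^{1,1} \) acts, forcing \( d\Hodge\upsilon\in\Lambda^{2,2}\oplus\Lambda^{1,3} \), which contains no copy of \( \Smod^{1,3} \); this is a genuine simplification that replaces one of the two rank computations by bookkeeping. What remains in both treatments is the verification that the three surviving equivariant maps are nonzero (without which ``identify'' is not justified), and your primary suggestion --- evaluating on an explicit representative in each summand --- is exactly what the paper's rank computation amounts to. Your fallback, ``appealing to the injectivity of \eqref{eq:intriso}'', does not work, however: that isomorphism concerns \( \xi\mapsto\LC\alpha+\LC\beta \) and says nothing about \( \xi\mapsto d\upsilon \) or \( \xi\mapsto d\Hodge\upsilon \); a torsion class in \( \Wmod \) could in principle act nontrivially on \( \alpha \) and \( \beta \) yet give zero after acting on \( \upsilon \) and alternating, so the explicit evaluation cannot be bypassed.
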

\begin{proof}
At each point,  \( d\upsilon \) is determined by the intrinsic torsion via an \( \SO(4) \)-equivariant map. By Schur's lemma, this implies that \( d\upsilon \) takes values in the component isomorphic to \( \Smod^{1,3} \). Computing the rank of
\begin{equation*} 
  T^*\otimes \so(7)\to\Lambda^4T^*\oplus \Lambda^5T^*\oplus \Lambda^5T^*, \, e^i\otimes A\mapsto e^i\w A\cdot\alpha + e^i\w A\cdot\beta+ e^i\w A\cdot \upsilon 
\end{equation*} 
we conclude that the kernel of \eqref{eqn:intto} does not contain this \( \Smod^{1,3} \).

A similar argument applies  for \( \Hodge\upsilon \). 
\end{proof}

Following \cite{Bryant:CalibratedEmbeddingsIn}, we shall say a subgroup \( \G \) of \( \GL(n,\bR) \) is \emph{admissible} if \( \G \) is the largest subgroup that fixes the space \( (\Lambda^*\bR^n)^{\G} \) of invariant forms on \( \bR^n \). \( \G \) is \emph{strongly admissible} if, on \( \G \)-structures, the closedness of the invariant forms is equivalent to the vanishing of the intrinsic torsion. For instance, \( \Sp(2)\Sp(1)\subset\SO(8) \) is admissible, but not strongly admissible. Similarly, the above observations imply:

\begin{corollary}
 \( \SO(4)\subset\SO(7) \) is admissible, but not strongly admissible.

\end{corollary}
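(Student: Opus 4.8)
The plan is to establish the two assertions independently: admissibility from the stabiliser computation of Lemma~\ref{lem:stabg2*}, and the failure of strong admissibility from the torsion-module analysis in the preceding propositions.

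For admissibility I must show that the largest subgroup \( G_0\subseteq\GL(7,\bR) \) fixing every form in \( (\Lambda^*T^*)^{\SO(4)} \) is \( \SO(4) \) itself. Since \( \alpha \) and \( \beta \) are invariant, \( G_0\subseteq\Stab_{\GL(7,\bR)}(\alpha)\cap\Stab_{\GL(7,\bR)}(\beta) \), whose Lie algebra is the intersection of the two copies of \( \g_2^* \), namely \( \so(4) \), by Lemma~\ref{lem:stabg2*}. As \( \SO(4)\subseteq G_0 \) is connected, this already forces the identity components to agree. To control the remaining components I would invoke that the metric \( g \) and the orientation are \( \GL(7,\bR) \)-equivariant functions of the pair \( (\alpha,\beta) \) --- this is precisely the statement that an \( \SO(4) \)-structure is determined by any two of \( \alpha \), \( \beta \), \( g \) --- so any element fixing \( \alpha \) and \( \beta \) also fixes \( g \) and the volume \( \alpha\w\beta \). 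Hence \( G_0\subseteq\SO(7) \), and there the stabiliser of \( (\alpha,\beta) \) is exactly \( \SO(4) \); therefore \( G_0=\SO(4) \).

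For the failure of strong admissibility I would show that closedness of the invariant forms is strictly weaker than \( \xi=0 \). Apart from constants and the volume form, the invariant forms are spanned by \( \alpha,\beta \) and \( \Hodge\upsilon=w^{123},\upsilon=e^{1234} \). By the proposition characterising harmonic structures, \( d\alpha=0=d\beta \) is equivalent to \( \xi\in\Wmod \); and, for such \( \xi \), the proposition relating \( d\upsilon \) and \( d\Hodge\upsilon \) to the torsion identifies \( d\upsilon \) with the \( \Wmod^{1,3} \)-component and \( d\Hodge\upsilon \) with the \( \Wmod^{2,2}\oplus\Wmod^{0,4} \)-component. Hence simultaneous closedness of all invariant forms is equivalent to \( \xi\in\Wmod^{1,5}\oplus\Wmod^{2,4} \), a non-trivial submodule. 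Thus the linear map sending \( \xi \) to the exterior derivatives of the invariant forms has non-zero kernel, so \( \SO(4) \) is not strongly admissible.

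The hard part will be converting this non-zero kernel into a bona fide counterexample in the sense of the definition: I must produce an actual \( \SO(4) \)-structure, not merely an algebraic kernel, carrying closed invariant forms yet non-vanishing \( \xi \). The plan here is to exploit that intrinsic torsion is unobstructed first-order data, so that a local coframe realising any prescribed \( \xi\in\Wmod^{1,5}\oplus\Wmod^{2,4} \) exists; keeping \( \xi \) pointwise in this submodule guarantees \( d\alpha=d\beta=d\upsilon=d\Hodge\upsilon=0 \) while \( \xi\neq0 \). A subsidiary point requiring care is the \( \GL(7,\bR) \)-equivariant reconstruction of \( g \) from \( (\alpha,\beta) \) used above, since the cleaner \( \SO(7) \)-stabiliser statement does not on its own rule out non-orthogonal or orientation-reversing elements of \( G_0 \).
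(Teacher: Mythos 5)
Your argument is correct and is essentially the paper's own (the corollary is stated there as an immediate consequence of the preceding results): admissibility follows from Lemma~\ref{lem:stabg2*}, since the common stabiliser of \( \alpha \) and \( \beta \) already has Lie algebra \( \so(4) \), and the failure of strong admissibility from the fact that closedness of \( \alpha,\beta,\upsilon,\Hodge\upsilon \) only confines \( \xi \) to the non-trivial submodule \( \Wmod^{1,5}\oplus\Wmod^{2,4} \). The two points you flag as delicate --- passing from the Lie-algebra stabiliser to the full group via equivariant reconstruction of \( g \) from \( (\alpha,\beta) \), and actually realising a non-zero \( \xi \) in that kernel --- are left implicit in the paper, and your proposed treatments of both are the standard ones.
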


As a final general remark on \( \SO(4) \)-structures, note that for any given harmonic structure, the  product \( \Sp(2)\Sp(1) \)-structure can be Einstein  only when it is Ricci-flat, due to flatness of the \( S^1 \) factor. Thus, the holonomy can reduce to \( \Sp(2)\Sp(1) \) only when the \( 8 \)-manifold is hyper-K\"ahler, and so the \( 7 \)-manifold has holonomy contained in \( \SO(4) \). On the other hand, if the holonomy reduces to \( \SO(4) \), the product \( 8 \)-manifold is hyper-K\"ahler, and so the Ricci tensor is zero. 

In the compact case, we can use standard arguments from Riemannian geometry to completely characterise the \( 7 \)-manifolds whose holonomy reduces to \( \SO(4) \):

\begin{theorem}
\label{thm:intrzero}
Let \( N \) be a compact \( 7 \)-manifold with a torsion-free \( \SO(4) \)-structure. Then, up to a finite cover, \( N \) is isometric to \( \bT^3\times K \), where \( \bT^3 \) is a flat
torus, and \( K \) is either a flat torus or a K3 surface. 
\end{theorem}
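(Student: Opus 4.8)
The plan is to exploit the observation recorded just before the statement: a torsion-free \( \SO(4) \)-structure (i.e.\ \( \xi=0 \), equivalently \( \alpha \) and \( \beta \) parallel) makes the product \( 8 \)-manifold \( \bR\times N \) hyper-K\"ahler. Since the product metric is Ricci-flat and \( \Ric_{\bR\times N}=0\oplus\Ric_N \), the factor \( N \) is itself compact Ricci-flat, and its restricted Levi-Civita holonomy satisfies \( \Hol^0(N)\subseteq\SO(4) \). The strategy is then to combine the splitting theorem for compact Ricci-flat manifolds with the Berger--de Rham holonomy classification, using the very small admissible holonomy to isolate exactly two possibilities.

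First I would pass to the universal cover \( \tilde N \) and invoke the Cheeger--Gromoll splitting in the form valid for compact Ricci-flat spaces: \( \tilde N \) is isometric to a Riemannian product \( \bR^k\times Z \) with \( Z \) compact, simply connected and Ricci-flat, and moreover a finite cover of \( N \) is isometric to the Riemannian product \( \bT^k\times Z \), where \( \bT^k \) is a flat torus. This reduces everything to identifying \( Z \), whose holonomy equals the connected group \( \Hol^0(N)\subseteq\SO(4) \) and which, by de Rham, is a product of irreducible non-flat factors. The compactness of \( Z \) is essential here and is exactly what the splitting theorem supplies (an irreducible complete Ricci-flat factor need not be compact otherwise).

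The key step is a representation-theoretic localization argument based on the \( \SO(4) \)-invariant splitting \( \bR^7\cong\bR^4\oplus\Lambda^2_-=\Smod^{1,1}\oplus\Smod^{2,0} \). I would first show that \( \Hol^0(N) \) acts trivially on \( \Lambda^2_- \): if some de Rham factor were supported on the invariant subspace \( \Lambda^2_- \), it would act trivially on the complementary \( \bR^4 \); but inside \( \SO(4)=\Sp(1)\Sp(1) \) only the identity fixes \( \bR^4=[H_+\otimes H_-] \) pointwise, a contradiction. Hence \( \Lambda^2_- \) lies in the flat factor, and \( \Hol^0(N) \) is contained in the subgroup acting trivially on \( \Lambda^2_- \), namely the factor \( \Sp(1)_- \) acting on \( \bR^4\cong\bH \) by right multiplication. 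A connected subgroup of \( \Sp(1)_-\cong\SU(2) \) is \( \{1\} \), \( \Un(1) \) or all of \( \SU(2) \); the intermediate circle is excluded, since a Ricci-flat irreducible factor cannot have holonomy \( \Un(1) \) (equivalently, the weight-space splitting of \( \bR^4 \) under the circle would not localize into a de Rham product). Thus \( Z \) is either a point or a compact simply connected hyper-K\"ahler \( 4 \)-manifold with holonomy exactly \( \SU(2) \).

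Finally I would identify the surviving factor: a compact simply connected hyper-K\"ahler surface --- equivalently a Ricci-flat K\"ahler surface with trivial canonical bundle --- is a K3 surface, by the Enriques--Kodaira classification. Reassembling, in the flat case \( k=7 \) and the finite cover is \( \bT^7=\bT^3\times\bT^4 \), while in the hyper-K\"ahler case \( \dim Z=4 \) forces \( k=3 \) and the finite cover is \( \bT^3\times Z \); in both cases this has the asserted form \( \bT^3\times K \) with \( K \) a flat \( 4 \)-torus or a K3 surface. I expect the main obstacle to be the holonomy localization of the third paragraph --- precisely ruling out the ``diagonal'' \( \Sp(1)_+ \) actions and the intermediate circle, i.e.\ verifying that no holonomy subgroup of \( \SO(4) \) other than a single \( \Sp(1) \) acting on the \( \bR^4 \)-summand is compatible with the de Rham product structure --- together with citing the compact Ricci-flat splitting in the sharp form that yields a \emph{product} finite cover rather than merely a quotient of one.
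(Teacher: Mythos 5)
Your proposal is correct and follows essentially the same route as the paper: the Cheeger--Gromoll splitting of a compact Ricci-flat manifold into a flat torus times a compact simply connected factor, followed by the observation that a reducible holonomy group inside \( \SO(4)=\Sp(1)\Sp(1) \) must be contained in a single \( \SU(2) \), and finally Berger's classification to identify the irreducible piece as a K3 surface. The paper compresses your third paragraph into a single line, whereas you spell out the localization to \( \Sp(1)_- \) and the exclusion of the intermediate circle; the extra detail is welcome but the underlying argument is the same.
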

\begin{proof}
By a splitting theorem due to Cheeger and Gromoll (see \cite[Corollary 6.67]{Besse:EinsteinManifolds}), the universal cover of \( N \) has the form \( \tilde N\times \bT^k \), where \( \tilde N \) is a compact, simply-connected Ricci-flat manifold, and \( \bT^k \) is a flat torus. If \( N \) is flat then \( k=7 \), so \( N \) is covered by a torus. Otherwise, since \( \tilde N\times \bT^k \to N \) is a local isometry, \( \tilde N\times \bT^k \) has non-trivial holonomy contained in \( \SO(4) \). As the holonomy representation is reducible, the holonomy of \( \tilde N\times \bT^k \), hence of \( \tilde N \), is in fact contained in \( \SU(2) \). In addition, \( \tilde N \) is irreducible, otherwise de Rham's theorem would give \( \tilde N\cong \tilde N'\times\bR \). Also note that \( \tilde N \) cannot be locally symmetric, by Ricci-flatness. According to Berger's holonomy classification, this implies that \( \tilde N \) is a \( 4 \)-dimensional manifold with holonomy \( \SU(2) \), accordingly a K3 surface (cf. \cite[Theorem 7.3.13]{Joyce:CompactManifoldsWith}). 
\end{proof}

\section{Explicit harmonic \( \SO(4) \)-structures}
\label{sec:harmSO4}

We have already noticed that any \( \SO(4) \)-structure determines a \( \G_2 \)-structure with the same underlying metric. Hence, it might be tempting to look for harmonic \( \SO(4) \)-structures on the total space of the bundle of anti-self-dual forms over a self-dual \( 4 \)-manifold \cite{Salamon:RiemannianGeometryAnd}. By duplicating and modifying the construction of \( \G_2 \)-metrics in \cite[Theorem 11.10]{Salamon:RiemannianGeometryAnd}, we can find an \( \SO(4) \)-structure on a domain of the total space of the bundle \( \Lambda^2_-T^*X\stackrel{\pi}\longrightarrow X \) over a self-dual \( 4 \)-manifold \( X \). In contrast with the \( \G_2 \) case, however, the closedness of the defining forms lead to two incompatible equations when \( X \) has non-zero scalar curvature. More precisely, we can always achieve either \( d\alpha=0 \) or \( d\beta=0 \), separately, but not \( d\alpha=0=d\beta \). In addition, the examples always have \( d\upsilon=0 \); one finds that \( d\Hodge\upsilon \) 
vanishes if and only if \( X \) has zero scalar curvature. The fact that we can always achieve \( d\beta=0 =d\upsilon \) is clearly equivalent to the following result by \cite{Salamon:Self-dualityand}:

\begin{proposition}
If \( X \) is a self-dual \( 4 \)-manifold, an open subset of \( \Lambda_-^2T^*X \) admits a cocalibrated
\( \G_2 \)-structure.

\end{proposition}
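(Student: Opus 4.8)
The plan is to realise the cocalibrated $\G_2$-structure as the natural $\G_2$-structure induced, in the sense of the discussion preceding the statement, by an $\SO(4)$-structure on an open subset of the total space $\mathcal E=\Lambda^2_- T^*X$ satisfying $d\beta=0=d\upsilon$. First I would use the Levi-Civita connection of $X$ to split $T\mathcal E$ into horizontal and vertical parts. Working with a local orthonormal coframe $e^1,\dots,e^4$ on $X$ and the induced basis $\omega_1,\omega_2,\omega_3$ of $\Lambda^2_-$, fibre coordinates $a_1,a_2,a_3$ describe points of $\mathcal E$ tautologically, and the connection $1$-forms $\rho_{st}=\sum_u\epsilon_{stu}\rho_u$ on $\Lambda^2_-$ produce vertical $1$-forms $b_s=da_s+\sum_t\rho_{st}a_t$. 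Pulling back $e^1,\dots,e^4$ horizontally and setting $w^s$ proportional to $b_s$ furnishes an adapted coframe, hence an $\SO(4)$-structure $(\alpha,\beta)$ modelled on \eqref{eq:defforms} together with its associated $\G_2$ $3$-form $\varphi=\alpha+4\Hodge\upsilon$ and $4$-form $\Hodge\varphi=\beta+4\upsilon$.

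The key simplification is that coclosedness reduces to a single equation. Since $\upsilon=e^{1234}$ is the pullback $\pi^*\mathrm{vol}_X$ of the volume form of $X$, it is automatically closed, so $d\Hodge\varphi=d\beta+4\,d\upsilon=d\beta$; thus the structure is cocalibrated precisely when $d\beta=0$. I would therefore insert a radial scaling function $h=h(r)$, with $r^2=\sum_s a_s^2$ the fibre radius (note $r\,dr=\sum_s a_sb_s$ is vertical), setting $w^s=h(r)\,b_s$, and compute $d\beta$ directly.

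The computation rests on two structure equations: on the base, $d\omega_s=\sum_{t,u}\epsilon_{stu}\rho_u\w\omega_t$, while the vertical forms satisfy $db_s=\sum_t R_{st}\,a_t-\sum_t\rho_{st}\w b_t$, where $R_{st}$ is the curvature of the induced connection on $\Lambda^2_-$. Here self-duality of $X$ enters decisively: it forces the component of the Weyl curvature acting on $\Lambda^2_-$ to vanish, so that $R_{st}$ is proportional to the horizontal forms $\omega_u$, with the scalar curvature as the only surviving coefficient. When these are substituted into $d\beta$, the connection terms $\rho_u$ cancel by the $\SO(3)$-invariance of the defining expression, and what remains is a sum of curvature terms (carrying the scalar curvature) and radial terms $h'(r)\,dr\w(\cdots)$. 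Collecting coefficients, $d\beta=0$ becomes a first-order ODE for $h$ in the variable $r$, with coefficients governed by the scalar curvature of $X$.

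Finally I would solve this ODE on a maximal interval, restrict to the corresponding disk- or annulus-subbundle of $\mathcal E$, and check that $h$ stays in the range keeping $\varphi$ non-degenerate, so that $\varphi$ is a genuine (stable) $\G_2$ $3$-form there; by construction $d\Hodge\varphi=0$, giving the desired cocalibrated structure. The main obstacle is the bookkeeping in the third step: one must verify that self-duality is exactly what makes every curvature contribution land in the span of the $\omega_u$, so that a single scaling profile can absorb it, and that the resulting ODE is solvable with non-degenerate solution on an open set. This also explains the contrast noted in the text: coclosedness ($d\beta=0=d\upsilon$) needs only self-duality of $X$, whereas $d\Hodge\upsilon=0$ forces the scalar curvature to vanish, and the full torsion-free condition would in addition require $X$ to be Einstein of the appropriate sign.
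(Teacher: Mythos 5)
Your overall strategy is the one the paper intends: realise the cocalibrated \( \G_2 \)-structure as the one induced by an \( \SO(4) \)-structure on \( \Lambda^2_-T^*X \), note that \( \Hodge\varphi=\beta+4\upsilon \) so that cocalibration reduces to \( d(\beta+4\upsilon)=0 \), and run a Bryant--Salamon type computation using the Levi-Civita splitting. (The paper itself does not carry this out; it presents the proposition as equivalent to the existence of structures with \( d\beta=0=d\upsilon \) and cites Salamon's earlier work and the modified construction of \( \G_2 \)-metrics.) However, there is a genuine gap in your execution: the single fibre-rescaling ansatz \( w^s=h(r)b_s \), \( e^i=\pi^*\hat e^i \) cannot work. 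Decompose forms on the total space by horizontal/vertical bidegree. Since \( r\,dr=\sum_s a_sb_s \) is purely vertical, the radial term \( d(h^2)\w(\omega_1\w b_2\w b_3+\omega_2\w b_3\w b_1+\omega_3\w b_1\w b_2) \) has bidegree \( (2,3) \), whereas the surviving curvature contribution to \( d\beta \) is proportional to \( s_X\,r\,dr\w\pi^*\mathrm{vol}_X \), of bidegree \( (4,1) \): only the anti-self-dual part of \( R_{st} \) survives wedging against the \( \omega_u \), and self-duality identifies that part with \( \tfrac{s_X}{12} \) times the identity. These two terms lie in complementary summands, so ``collecting coefficients'' does not produce one ODE for \( h \); it produces the two separate conditions \( h'=0 \) and \( s_X=0 \). (Relatedly, your claim that self-duality makes \( R_{st} \) proportional to the \( \omega_u \) is imprecise: the trace-free Ricci part of the curvature survives in \( R_{st} \); it is only killed upon wedging with anti-self-dual forms.)

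The repair is to rescale the horizontal coframe as well, \( e^i=f\,\pi^*\hat e^i \), \( w^s=g\,b_s \), exactly as in the Bryant--Salamon ansatz, and to impose \( d\Hodge\varphi=0 \) directly rather than \( d\beta=0 \) and \( d\upsilon=0 \) separately. Then \( \Hodge\varphi=f^4\,\pi^*\mathrm{vol}_X-f^2g^2\sum_s\hat\omega_s\w b_t\w b_u \), the \( (2,3) \) part of \( d\Hodge\varphi \) gives \( (f^2g^2)'=0 \), and the \( (4,1) \) part gives a first-order equation of the form \( (f^4)'=-c\,s_X\,r\,f^2g^2 \), in which the term \( d(f^4)\w\pi^*\mathrm{vol}_X \) now has the right bidegree to cancel the curvature contribution. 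This is solvable even for non-constant \( s_X \) (the horizontal differential of \( f^4 \) wedges to zero against \( \pi^*\mathrm{vol}_X \)), and restricting to the open set where \( f^4>0 \) and \( g^2>0 \) yields the cocalibrated structure. Your final remarks about which conditions force \( s_X=0 \) versus Einstein are in the right spirit, but they cannot be extracted from the one-function ansatz as written.
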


\subsection{Proof of Theorem \ref{thm:main}}

Another possible setting for producing examples is that of Lie algebras. In fact, we have found two ways of manufacturing harmonic \( \SO(4) \)-structures on semidirect products of Lie algebras. In the following, we explain these constructions and use them to exhibit a number of harmonic structures on nilpotent and solvable algebras; the nilpotent ones are those appearing in Table \ref{table:harmonicliealg}.  

The construction methods are based on relations between harmonic \( \SO(4) \)-structures and certain types of 
half-flat structures on hypersurfaces thereof. Recall that on  \( \bR^6 \) we can define an \( \SU(2,1) \)- or \( \SL(3,\bR) \)-structure by the stabiliser of the forms
\begin{equation*}
  \omega=e^{12}+e^{34}+e^{56}, \, \psi^+=\re (e^1+ie^2)\w(e^3+ie^4)\w(e^5-ie^6)
\end{equation*}
or
\begin{equation*}
  \omega=e^{12}+e^{34}+e^{56}, \, \rho =e^{135}+e^{246},
\end{equation*}
respectively. In each case another  \( 3 \)-form is induced by
\begin{equation*}
  \psi^-=\im  (e^1+ie^4)\w(e^2+ie^5)\w(e^3-ie^6) \textrm{ and } \hat\rho=e^{135}-e^{246}. 
\end{equation*}
The adjective ``half-flat'' refers to the condition that defining forms \( \psi^+ \) (resp. \( \rho \)) and \( \omega^2 \) are closed. 

Notice that if we take an \( \SU(2,1) \)-structure and fix an extra \( 2 \)-form, say  \( \gamma=e^{56} \), the structure group is reduced to \( S(\Un(2)\times\Un(1)) \). 
Also observe that the structure group \( \SL(3,\bR) \) fixes an additional \( 3 \)-form, namely
\begin{equation*}
  \gamma=(e^1-e^2)\w(e^3-e^4)\w(e^5-e^6).
\end{equation*}

Our first construction of harmonic structures is as follows.

\begin{proposition}
\label{prop:nilconA}
Let \( (\alpha,\beta) \) be a harmonic \( \SO(4) \)-structure on a Lie algebra \( \g \). Let \( \eta \) be a unit \( 1 \)-form in \( \Lambda^{0,1} \) such that \( \n=\ker\eta \) is a Lie subalgebra (i.e. \( d\eta\w\eta=0 \)). Then \( \n \) has an induced half-flat \( S(\Un(2)\times \Un(1)) \)-structure such that
\begin{equation*}
  \alpha|_{\n}=\sqrt3\psi^+, \, \beta|_{\n}=\tfrac32\omega^2, \, \gamma=\tfrac18(\eta^\sharp\hook\alpha-\omega).
\end{equation*}
Assume, in  addition, that \( \n \) is an ideal in \( \g \) (i.e \( \eta \) is closed). Then the derivation \( b\in\Der(\n) \) associated to the adjoint action of \( \sqrt3 \eta^\sharp \) on \( \n \) satisfies
\begin{equation}
  \label{eqn:nilconA}  
  b\cdot \psi^+   - d\omega -8d\gamma = 0 =  \tfrac12 b\cdot \omega^2+ d\psi^-.
\end{equation}

Conversely, given a \( 6 \)-dimensional Lie algebra \( \n \) with a derivation \( b \) and  a half-flat \( S(\Un(2)\times\Un(1)) \)-structure satisfying \eqref{eqn:nilconA}. Then the semidirect product \( \g=\n_{\,b}\!\!\rtimes \bR \)  has a harmonic \( \SO(4) \)-structure.
\end{proposition}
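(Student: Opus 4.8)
The plan is to treat the statement as a dimensional reduction inside the Lie algebra: split \( \g = \n \oplus \Span{\eta^\sharp} \) and reduce the conditions \( d\alpha = 0 \), \( d\beta = 0 \) to equations on the \( 6 \)-dimensional factor \( \n \). Since \( \Sp(1)_+ \) acts as \( \SO(3) \) on \( \Smod^{2,0} = \Lambda^2_- \), hence transitively on its unit sphere, I would first use \( \SO(4) \)-equivariance to assume \( \eta = w^3 \), so that \( \eta^\sharp = e_7 \) and \( \n = \Span{e_1,e_2,e_3,e_4,e_5,e_6} \). Fixing a unit vector in \( \Smod^{2,0} \) reduces \( \SO(4) = \Sp(1)_+\Sp(1)_- \) to \( (\Un(1)_+\times\Sp(1)_-)/(\bZ\slash2\bZ) \cong \Un(2) \cong S(\Un(2)\times\Un(1)) \), which is exactly the structure group claimed on \( \n \).

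First I would carry out the linear-algebraic part. Decomposing \( \alpha = \eta\w(\eta^\sharp\hook\alpha) + \alpha|_\n \) and \( \beta = \eta\w(\eta^\sharp\hook\beta) + \beta|_\n \), direct contractions give \( \eta^\sharp\hook\alpha = \omega_3 - 3w^{12} \), \( \alpha|_\n = \omega_1\w w^1 + \omega_2\w w^2 \), \( \eta^\sharp\hook\beta = \omega_1\w w^2 - \omega_2\w w^1 \) and \( \beta|_\n = -\omega_3\w w^{12} - 3e^{1234} \). The task is then to produce an explicit (rescaled) coframe identifying \( \n \) with the model \( \bR^6 \) under which these become \( \alpha|_\n = \sqrt3\psi^+ \), \( \eta^\sharp\hook\beta = \sqrt3\psi^- \), \( \beta|_\n = \tfrac32\omega^2 \) and \( \gamma = \tfrac18(\eta^\sharp\hook\alpha - \omega) \), and to check that fixing the extra \( 2 \)-form \( \gamma \) reduces the \( \SU(2,1) \)-structure to \( S(\Un(2)\times\Un(1)) \). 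I expect the complex structure and \( \omega \) to have indefinite (split) signature, reflecting the fact that \( \alpha \) and \( \beta \) have the split stabiliser \( \g_2^* \); thus the induced geometry is genuinely of \( \SU(2,1) \) rather than \( \SU(3) \) type, and one must work with that model throughout. The half-flat conditions \( d_\n\psi^+ = 0 = d_\n\omega^2 \) are then automatic for any subalgebra \( \n \): the inclusion \( \iota\colon\n\hookrightarrow\g \) is a Lie-algebra homomorphism, so \( \iota^* \) commutes with \( d \) and \( 0 = \iota^*(d\alpha) = \sqrt3\,d_\n\psi^+ \), \( 0 = \iota^*(d\beta) = \tfrac32 d_\n\omega^2 \).

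Next, assuming \( \n \) is an ideal (so \( \eta \) is closed), I would split the Chevalley--Eilenberg differential: writing \( \bar\eta = \eta/\sqrt3 \) for the form dual to \( X = \sqrt3\,\eta^\sharp \) and \( b = \ad_X|_\n \) for the associated derivation, one has \( d_\g\mu = d_\n\mu \pm \bar\eta\w(b\cdot\mu) \) for \( \mu\in\Lambda^\ast\n^* \). Substituting \( \alpha = \sqrt3\psi^+ + \eta\w(\omega + 8\gamma) \) and \( \beta = \tfrac32\omega^2 + \eta\w(\sqrt3\psi^-) \), and using \( d\bar\eta = 0 \) together with \( \bar\eta\w\bar\eta = 0 \), the \( \bar\eta \)-free parts of \( d_\g\alpha \) and \( d_\g\beta \) reproduce half-flatness, while their \( \bar\eta \)-coefficients yield the two relations \eqref{eqn:nilconA}, once the signs in the coframe dictionary (in particular that of \( \psi^- \)) and the sign convention in the split differential are fixed consistently. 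This computation is the mechanical heart of the forward direction.

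For the converse I would run the construction in reverse. Given \( \n \), the derivation \( b \) and a half-flat \( S(\Un(2)\times\Un(1)) \)-structure satisfying \eqref{eqn:nilconA}, I set \( \g = \n_{\,b}\!\!\rtimes\bR \), define \( \alpha \) and \( \beta \) by the same two formulas, and invoke the linear-algebra identities of the first step (read backwards) to conclude that \( (\alpha,\beta) \) has stabiliser \( \SO(4) \). The differential then splits exactly as above: the \( \bar\eta \)-free components of \( d_\g\alpha \) and \( d_\g\beta \) vanish by half-flatness, and the \( \bar\eta \)-components vanish by \eqref{eqn:nilconA}, so \( d\alpha = 0 = d\beta \) and the structure is harmonic. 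I expect the main obstacle to be the first step: pinning down the rescaled coframe with all the normalising constants (\( \sqrt3 \), \( \tfrac32 \), \( 8 \)) and, above all, recognising the induced \( 6 \)-dimensional geometry as the indefinite \( \SU(2,1) \)-structure rather than \( \SU(3) \). Once the dictionary \( \alpha|_\n = \sqrt3\psi^+ \), \( \eta^\sharp\hook\beta = \sqrt3\psi^- \), \( \beta|_\n = \tfrac32\omega^2 \), \( \gamma = \tfrac18(\eta^\sharp\hook\alpha - \omega) \) is fixed, the rest is bookkeeping with the split differential.
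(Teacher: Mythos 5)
Your proposal is correct and follows essentially the same route as the paper: normalise \( \eta=w^3 \) by equivariance, compute the restrictions and contractions (your formulas for \( \eta^\sharp\hook\alpha \), \( \alpha|_\n \), \( \eta^\sharp\hook\beta \), \( \beta|_\n \) and the decompositions \( \alpha=\sqrt3\psi^++\eta\w(\omega+8\gamma) \), \( \beta=\tfrac32\omega^2+\sqrt3\,\eta\w\psi^- \) all agree with the paper's adapted coframe \( (E^1,\dotsc,E^6)=(e^1,e^4,e^2,-e^3,\tfrac1{\sqrt3}e^5,-\tfrac1{\sqrt3}e^6) \)), deduce half-flatness from \( \iota^*d=d\iota^* \), split the Chevalley--Eilenberg differential as \( d\chi=\tilde\eta\w b\cdot\chi+d_\n\chi \) to read off \eqref{eqn:nilconA} from the \( \eta \)-components, and reverse the construction for the converse. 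The only content you add beyond the paper is the explicit group-theoretic justification that the stabiliser of a unit vector in \( \Smod^{2,0} \) is \( S(\Un(2)\times\Un(1)) \), which the paper leaves implicit.
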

\begin{proof}
Choose an adapted frame \( e^1,\dotsc, e^7 \) such that \( \eta=e^7 \). Then
\begin{equation*}
  \alpha|_{\n} = \omega_1\w e^5+\omega_2\w e^6, \, \beta|_{\n} = -\omega_3\w e^{56}-3e^{1234},
\end{equation*}
and consequently
\begin{equation*}
\begin{gathered}
  \psi^+=\re  (e^1+ie^4)\w(e^2-ie^3)\w(\tfrac1{\sqrt3} e^5+\tfrac{i}{\sqrt3} e^6),\\
  \omega =e^{14}-e^{23}-\tfrac13e^{56}, \, \gamma =-\tfrac13e^{56}.
\end{gathered}
\end{equation*}

These forms determine an \( S(\Un(2)\times \Un(1)) \)-structure on \( \n \) which has an adapted coframe
\( (E^1,\dotsc, E^6)=(e^1, e^4, e^2,-e^3, \tfrac1{\sqrt3} e^5,-\tfrac1{\sqrt3} e^6) \),
and is half-flat because \( \psi^+ \) and \( \omega^2 \) are restrictions of closed forms.

If \( \n \) is an ideal in \( \g \), we can express the exterior derivative \( d \) on the exterior algebra over the semidirect product \( \g=\n\rtimes\bR \) as
\begin{equation*}
  d\chi=\eta\w \ad^*(\eta^\sharp)\cdot \chi + d_{\n}\chi=\tilde\eta\w b\cdot \chi + d_{\n}\chi,
\end{equation*}
where \( \tilde\eta=\tfrac1{\sqrt3}\eta \).

In terms of the adapted coframe, we have
\begin{equation*}
  \psi^-=\im (e^1+ie^4)\w(e^2-ie^3)\w(\tfrac1{\sqrt3} e^5+\tfrac{i}{\sqrt3} e^6)=\tfrac1{\sqrt3}(e^{126}-e^{135}+e^{425}+e^{436}), 
\end{equation*}
so that \( \alpha \) and \( \beta \) can be expressed as
\begin{equation*}
  \alpha = \sqrt3\left(\psi^+ + \tilde\eta\w(\omega+8\gamma)\right),\, \beta =  3\left(\tfrac12\omega^2  -\psi^-\w \tilde\eta\right).
\end{equation*}
Thus
\begin{equation*}
  \eta^\sharp\hook d\alpha =  b\cdot\psi^+ - d_{\n}\omega -8 d_{\n}\gamma,\, \eta^\sharp\hook d\beta = \sqrt3\left( \tfrac{1}2 b\cdot \omega^2- d_{\n}\psi^-\right),
\end{equation*}
from which \eqref{eqn:nilconA} follows.

For the last part of the statement, let \( \eta \) be a \( 1 \)-form on \( \g=\n\rtimes\bR \) that annihilates \( \n \) and satisfies
\( d\chi=\tfrac1{\sqrt3}\eta\w b\cdot \chi + d_{\n}\chi \).

If \( E^1,\dotsc, E^6 \) is a coframe on \( \n \), adapted to the given  \( S(\Un(2)\times \Un(1)) \)-structure, set
\begin{equation*}
  (e^1,\dotsc, e^7)=(E^1, E^3,-E^4, E^2, \sqrt3 E^5, -\sqrt3E^6, \eta).
\end{equation*}
The considerations above imply that the \( \SO(4)\)-structure on \( \g \) associated to this coframe is harmonic. 
\end{proof}

\begin{remark}
The \( 2 \)-form \( \gamma \) can be expressed in terms of \( \Hodge \upsilon \). Inspection shows that \( \gamma=\eta^\sharp\hook\Hodge\upsilon \).
In particular, it follows that if \( d\Hodge\upsilon=0 \), then \( \gamma \) is closed. In that case, \eqref{eqn:nilconA} reads
\begin{equation*}
  b\cdot \psi^+ = d\omega, \, b\cdot \omega^2=-2 d\psi^-.
\end{equation*}
These equations are algebraic analogues of the flow equations one encounters in the study of half-flat \( \SU(3) \)-structures
in the context of \( \G_2 \)-holonomy metrics (cf. \cite{Hitchin:Stableformsand}).
\end{remark}

The method described above enables us to construct harmonic \( \SO(4) \)-structures on two nilpotent 
Lie algebras, corresponding to the first two algebras of Table \ref{table:harmonicliealg}.

\subsubsection{The algebra \( \n_1 \)}
The nilpotent Lie algebra 
\begin{equation*}
(0, 0, f^{27}, f^{27}{+}f^{17}, f^{12}, f^{14}{+}f^{23}{+}f^{57}, 0)
\end{equation*}
admits a harmonic \( \SO(4) \)-structure whose adapted coframe
is given by
\begin{equation*} 
(e^1,\ldots,e^7)=(f^1+f^2, f^6, - f^5, f^3, \sqrt3 f^4, -\sqrt3 f^1, \sqrt3 f^7). 
\end{equation*}

In terms of the classifications of \cite{Gong:Classificationofnilpotent,Conti-F:Nilmanifoldswithcalibrated}, this Lie algebra is the distinguished member of the \( 1 \)-parameter family of Lie algebras, denoted by \( 147E1 \), which carries a calibrated \( \G_2 \)-structure. Specifically, it can be written as
\begin{equation*} 
\n_1=(0,0,0,12,23,-13,26-34-16+25). 
\end{equation*}

\subsubsection{The algebra \( \n_2 \)}
\label{example:giovannini}
The second example of a harmonic structure is found on the Lie algebra \(  (0, 0, 0, 0, f^{27}{+}f^{12}, f^{37}{+}f^{13}, 0) \).
The adapted coframe is
\begin{equation*} 
  (e^1,\ldots,e^7)=(f^2-f^6, f^1, -f^4, f^5, \sqrt3f^3 , - \sqrt3f^6, \sqrt3f^7).
\end{equation*}

The underlying Lie algebra can also be expressed as 
\( \n_2=(0,0,0,0,0,12,13) \)
and is clearly a decomposable Lie algebra. In fact, it can be decomposed as 
\begin{equation*}
  \Span{f^1,f^2,f^3,f^5,f^6,f^7}\oplus\Span{f^4};
\end{equation*}
the \( 6 \)-dimensional component inherits an \( \SO(3) \)-structure, in the sense of \cite{Giovannini:SpecialStructuresAnd}. Its  defining forms are
\begin{equation*} 
  -e^{45}-e^{16}+e^{27}, \, e^{125}-e^{426}+e^{147}-3e^{567},\, -e^{467}-e^{175}-e^{256}+3e^{124}.
\end{equation*}
The associated \( \Sp(2)\Sp(1) \)-structure is already known due to  \cite{Salamon:Almostparallelstructures,Giovannini:SpecialStructuresAnd}.

In this case, there is a \( 4 \)-dimensional subalgebra \( \so(3)\oplus\lie{u}(1) \) of \( \so(8) \) that preserves the Lie algebra structure; its intersection with \( \spto \) is the \( 1 \)-dimensional
component. By construction, the infinitesimal action of \( \so(3) \) on the closed \( 4 \)-form gives a space of closed forms; the infinitesimal orbit under the action of \( \so(8) \) contains no other closed form.

\bigskip

Similar techniques as in the proof of the Proposition \ref{prop:nilconA} lead to:

\begin{proposition}
\label{prop:nilconB}
Let \( (\alpha,\beta) \) be a harmonic \( \SO(4) \)-structure on a Lie algebra \( \g \). Let \( \eta\in\Lambda^{1,0} \) be a unit \( 1 \)-form on \( \g \) such that \( \n=\ker\eta \) is a subalgebra. Then \( \n \) has an induced half-flat \( \SL(3,\bR) \)-structure such that
\begin{equation*} 
  \alpha|_{\n}=\tfrac{\sqrt6}3\rho, \, \beta|_{\n}=-\tfrac16\omega^2. 
\end{equation*}
Assume, in addition, that \( \n \) is an ideal. Then the derivation \( b\in\Der(\n) \) associated to \( \ad(\sqrt 2 \eta^\sharp) \) satisfies
\begin{equation}
  \label{eqn:nilconB}
  b\cdot \rho   + d\omega =0,  \, -\tfrac{1}{4} b\cdot \omega^2+ d\hat\rho+	2 d\gamma=0.
\end{equation}

Conversely, given a \( 6 \)-dimensional Lie algebra \( \n \) with a derivation \( b \) and  a half-flat \( \SL(3,\bR) \)-structure satisfying \eqref{eqn:nilconB}, then the semidirect product \( \g=\n_{\,b}\!\!\rtimes \bR\) has a harmonic \( \SO(4) \)-structure.

\end{proposition}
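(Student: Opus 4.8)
The plan is to run the argument of Proposition~\ref{prop:nilconA} almost verbatim, the only structural change being that the distinguished covector is now \emph{horizontal}. Since \( \SO(4) \) acts on \( \Lambda^{1,0}=\Smod^{1,1} \) as the standard representation, transitively on the unit sphere, I would first use this freedom to normalise \( \eta=e^4 \); then \( \n=\ker\eta \) carries the coframe \( e^1,e^2,e^3,w^1,w^2,w^3 \), and restricting \eqref{eq:defforms} (that is, setting \( e^4=0 \)) yields
\begin{equation*}
  \alpha|_{\n}=e^{125}+e^{136}-e^{237}-3e^{567},\qquad \beta|_{\n}=-e^{1267}+e^{1357}+e^{2356}.
\end{equation*}
The task is then to exhibit an adapted coframe \( E^1,\dotsc,E^6 \) of \( \n^* \) in which \( \alpha|_{\n}=\tfrac{\sqrt6}3\rho \) and \( \beta|_{\n}=-\tfrac16\omega^2 \), with the induced forms \( \hat\rho \) and \( \gamma \) taking their standard shapes. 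Half-flatness is then immediate: restriction to a subalgebra commutes with \( d \), so \( \rho \) and \( \omega^2 \), being proportional to the restrictions of the closed forms \( \alpha \) and \( \beta \), are closed on \( \n \).

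The reason the induced structure is \( \SL(3,\bR) \) rather than the \( S(\Un(2)\times\Un(1)) \) of Proposition~\ref{prop:nilconA} is representation-theoretic: by Lemma~\ref{lem:stabg2*} each of \( \alpha,\beta \) is stabilised by a copy of the split group \( \G_2^* \), and the \( \G_2^* \)-stabiliser of a covector depends on the sign of its norm in the split metric; a horizontal \( \eta\in\Lambda^{1,0} \) is ``spacelike'', giving \( \SL(3,\bR) \), whereas the vertical \( \eta \) of Proposition~\ref{prop:nilconA} is ``timelike''. Concretely \( \Stab(\eta)\cong\SO(3)\subset\SO(4) \) fixes \( \alpha|_{\n} \) and \( \beta|_{\n} \); since the space of \( \SO(3) \)-invariant \( 2 \)-forms on \( \n \) is one-dimensional, \( \eta^\sharp\hook\alpha \) is forced to be a multiple of \( \omega \), and one checks by a direct computation that \( \eta^\sharp\hook\alpha=-\tfrac1{\sqrt3}\omega \) (consistency with \( \beta|_{\n}=-\tfrac16\omega^2 \) fixes the scale). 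The invariant \( 3 \)-form \( \eta^\sharp\hook\beta \) then lies in the span of \( \rho,\hat\rho,\gamma \) (and one further invariant), and equals \( -\tfrac{\sqrt2}3(\hat\rho+2\gamma) \).

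Assuming \( \n \) is an ideal (\( \eta \) closed), I would write the differential on \( \g=\n\rtimes_b\bR \) as \( d\chi=\tilde\eta\w b\cdot\chi+d_{\n}\chi \), with \( \tilde\eta=\tfrac1{\sqrt2}\eta \) and \( b=\ad(\sqrt2\,\eta^\sharp) \); the factor \( \sqrt2 \) is chosen precisely so as to normalise the constants in \eqref{eqn:nilconB}. Decomposing \( \alpha=\tfrac{\sqrt6}3\rho+\eta\w(\eta^\sharp\hook\alpha) \) and \( \beta=-\tfrac16\omega^2+\eta\w(\eta^\sharp\hook\beta) \), using \( d_{\n}\rho=0=d_{\n}\omega^2 \) and contracting with \( \eta^\sharp \) gives
\begin{equation*}
  \eta^\sharp\hook d\alpha=\tfrac1{\sqrt3}b\cdot\rho-d_{\n}(\eta^\sharp\hook\alpha),\qquad \eta^\sharp\hook d\beta=-\tfrac1{6\sqrt2}b\cdot\omega^2-d_{\n}(\eta^\sharp\hook\beta).
\end{equation*}
Substituting the two contractions found above and imposing \( d\alpha=0=d\beta \) reproduces \eqref{eqn:nilconB} after clearing constants. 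For the converse I reverse the coframe substitution: given \( \n \), \( b \) and a half-flat \( \SL(3,\bR) \)-structure satisfying \eqref{eqn:nilconB}, the forms \( \alpha=\tfrac{\sqrt6}3\rho-\tfrac1{\sqrt3}\eta\w\omega \) and \( \beta=-\tfrac16\omega^2-\tfrac{\sqrt2}3\eta\w(\hat\rho+2\gamma) \) on \( \g=\n\rtimes_b\bR \) reproduce \eqref{eq:defforms} in the reversed coframe, and the same identities show \( d\alpha=0=d\beta \).

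The main obstacle is the explicit identification of the coframe \( E^1,\dotsc,E^6 \). In contrast with Proposition~\ref{prop:nilconA}, the coordinate Lagrangians of \( \omega \) do \emph{not} line up with the transverse planes singled out by \( \rho \) — a naive rescaled permutation of \( e^1,e^2,e^3,w^1,w^2,w^3 \) matches \( \omega \) but not \( \rho \) — so the coframe genuinely tilts the \( e^i \); equivalently one must diagonalise the para-complex structure determined by the stable form \( \rho \) (cf.~\cite{Hitchin:Stableformsand}). The second delicate point, absent in Proposition~\ref{prop:nilconA}, is decomposing the invariant \( 3 \)-form \( \eta^\sharp\hook\beta \) into its \( \hat\rho \)- and \( \gamma \)-parts, so that \emph{both} \( d\hat\rho \) and \( d\gamma \) enter the second equation of \eqref{eqn:nilconB}; the constants \( -\tfrac14 \) and \( 2 \) have to be tracked through this decomposition.
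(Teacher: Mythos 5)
Your proposal is correct and follows exactly the route the paper intends: the paper gives no separate proof of Proposition~\ref{prop:nilconB}, stating only that ``similar techniques'' as in Proposition~\ref{prop:nilconA} apply, and your adaptation (normalising \( \eta=e^4 \), checking \( \eta^\sharp\hook\alpha=-\tfrac1{\sqrt3}\omega \) and \( \eta^\sharp\hook\beta=-\tfrac{\sqrt2}3(\hat\rho+2\gamma) \), then contracting \( d\alpha \) and \( d\beta \) with \( \eta^\sharp \) using \( d=\tfrac1{\sqrt2}\eta\w b\cdot{}+d_{\n} \)) is that argument; the constants you track do reproduce \eqref{eqn:nilconB} exactly. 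The two computational points you flag (the explicit adapted coframe and the split-type, i.e.\ positive Hitchin invariant, of \( \alpha|_{\n} \)) do work out as you predict, so nothing in the plan fails.
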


\begin{remark}
Expressing the \( 3 \)-form \( \gamma \) in terms of \( \upsilon \), 
\( \gamma=2\sqrt2\eta^\sharp\hook\upsilon \), we see that if \( d\upsilon=0 \) 
then \( \gamma \) is closed. Consequently, the system \eqref{eqn:nilconB} reduces to the equations
\begin{equation*}
    b\cdot \rho =-d\omega ,\,  \tfrac14 b\cdot \omega^2= d\hat\rho.
\end{equation*}
\end{remark}

The method of Proposition~\ref{prop:nilconB} results in harmonic \( \SO(4) \)-structures on \( 9 \) different nilpotent 
Lie algebras; these are the last \( 9 \) algebras of Table \ref{table:harmonicliealg}. 
In addition, we find harmonic structures on different (non-nilpotent) solvable Lie algebras, as explained in the next
section.

\subsubsection{The algebras \( \n_3,\ldots,\n_7\)}
 A harmonic structure is found on the family of Lie algebras given by
\begin{equation*}
  \begin{split}
  \bigl(0,0, (cf^1- bf^2)f^7&, (3  bf^2-2  cf^1)f^7+f^{12},(af^1 +3f^2{-}3  b f^3- bf^4)f^7+f^{13},\\
  &-(3f^1+af^2+cf^4+2  c f^3)f^7+f^{23}, 0\bigr) 
  \end{split}
\end{equation*}
where \( a,b,c\in \bR \). The adapted coframe is
\begin{equation*}
  \tfrac1{\sqrt2}(f^2-f^5, f^6-f^1, -f^3,  2f^7, -\tfrac1{\sqrt3} (5 f^3+2 f^4), \tfrac1{\sqrt3}(f^1+f^6), \tfrac1{\sqrt3}(f^2+f^5)).
\end{equation*}

There are three cases, depending on the value of \( b \) and \( c \). First, let us assume both \( b \) and \( c \) are zero. Then the space of exact \( 2 \)-forms is a \( 3 \)-dimensional subspace  \( V\subset \Lambda^2\Span{f^1,f^2,f^3,f^7} \). The form \( df^4 \) is decomposable, and the subset \( \{\gamma\in V\mid \gamma^2=0\} \) is the union of straight lines parallel to \( df^4 \). Taking its image in the quotient space \( V/\Span{df^4} \), we obtain the trivial space when \( \abs{a}<3 \), a line when \( \abs{a}=3 \), and two lines when \( \abs{a}>3 \). Now, using the classification of nilpotent Lie algebras, we conclude that our Lie algebra is isomorphic, respectively, to 
\begin{equation*}
  \begin{gathered}
  \n_3=(0,0,0,0,12-34,13+24,14), \,\n_4=  (0,0,0,0,12+34,23,24),\\ 
  \n_5= (0,0,0,0,12,13,34).
  \end{gathered}
\end{equation*}

Secondly, let us consider the case when exactly one of \( b \) or \( c \) is zero. Then the Lie algebra is isomorphic to \( \n_6=(0,0,0,12,13,23+14,25+34) \). This can be seen, e.g., by considering the coframe 
\begin{equation*}
  (f^2,bf^7,-\tfrac3bf^1,-f^3,\tfrac3b(f^4+3f^3-\tfrac ab f^1),-f^6+\tfrac ab f^3,\tfrac3b(f^5+\tfrac3bf^3)) 
\end{equation*}
when \( b\neq0 \) and 
\begin{equation*}
  (f^1,-cf^7,-\tfrac 3cf^2,-f^3,-\tfrac3c(f^4+2f^3+\tfrac ac f^2),-f^5 +\tfrac acf^3,\tfrac 3c(f^6+\tfrac 3c f^3) )
\end{equation*}
when \( c\neq0 \).

Finally, we are left with the case when  \( b \), \( c \) are both non-zero. Then the first Betti number is three and, in addition, the dimension of
\( [\g,[\g,\g]] \) is two. Going through the classification of \cite{Gong:Classificationofnilpotent} (see also \cite{Conti-F:Nilmanifoldswithcalibrated}), this leaves us with
\( 22 \) possibilities. We can easily rule out \( 14 \) of these, namely by observing that the space
\( \Span {df^i\w df^j\mid 1\leq i,j\leq 7} \) has dimension two. For each of the remaining \( 8 \) possibilities, we then consider the unique (up to non-zero multiple) element \( \tilde f \) of \( \g^* \) satisfying 
\begin{equation*}
  \dim \left(B^2\w\tilde f + \Lambda^3([\g,\g]^o)\right)=2.
\end{equation*}
In our case, \( \tilde f=-cf^1+bf^2-bcf^7 \) and \( \tilde f\w d([[\g,\g],\g]^o)=\Span{bcf^{127}} \). Since, by assumption, we have \( bc\neq0 \), this space is \( 1 \)-dimensional. These observations leave us with only two possibilities which are
\begin{equation*}
  (0,0,0,12,13,35+24+15+14,34+25),\, \n_7=(0,0,0,12,13,35+15,34+25). 
\end{equation*}
Considering finally the dimension of \( B^3(\g) \), we can rule out the first of these.

\subsubsection{The algebras \( \n_8,\ldots,\n_{10} \)}  
\label{example:F1}
The family of nilpotent Lie algebras
\begin{equation*} 
  (0, 0, 0, 0, af^{27}+f^{12}+2 f^{37}, -af^{37}+e^{13}-2 f^{27}, 0), 
\end{equation*}
\( a\in\bR \), admits a harmonic \( \SO(4) \)-structure with adapted coframe given by
\begin{equation*} 
  \tfrac1{\sqrt2}(f^3-f^5,f^6-f^2,f^1,2f^7,-\tfrac1{\sqrt3}(f^1+2f^4),\tfrac1{\sqrt3}(f^2+f^6),\tfrac1{\sqrt3}(f^3+f^5)). 
\end{equation*}
This is a decomposable Lie algebra which is isomorphic to 
\begin{equation*} 
  \begin{gathered}
    \n_8=(0,0,0,0,0,12,34),\,\mathfrak n_9= (0,0,0,0,0,12,14+23),\\ 
    \n_{10}= (0,0,0,0,0,13+42,14+23),  
 \end{gathered}
\end{equation*}
according to whether \( \abs{a}>2 \), \( a=\pm2 \), or \( \abs{a}<2 \). We might as well assume that \( a\geq 0 \), since the symmetry
\( f^1\mapsto -f^1 \),\( f^2\mapsto -f^2 \),\( f^6\mapsto -f^6 \), \( f^4\mapsto -f^4 \) corresponds to an element of \( \Sp(2)\Sp(1) \) 
whose only effect is to change the sign of \( a \).

As in the section~\ref{example:giovannini}, we can choose a decomposition of the Lie algebra so as to obtain an \( \SO(3) \)-structure on the \( 6 \)-dimensional component. The defining forms, however,
will not be closed in this case. This explains why the algebra is absent from \cite{Giovannini:SpecialStructuresAnd}.

A straightforward computation shows that, in terms of the adapted frame, the Ricci tensor of the associated \( 8 \)-dimensional metric is given by
\begin{equation*}
\left(\begin{smallmatrix}
    \tfrac{4}3+\tfrac{a^2}6              &\tfrac{4a}3                            &0          &0                        &0       &0                                   &-\tfrac{4\sqrt3}3-\tfrac{\sqrt3a^2}6  &0\\
    \tfrac{4a}3                          &\tfrac43+\tfrac{a^2}6                  &0          &0                        &0       &\tfrac{4\sqrt3}3+\tfrac{\sqrt3a^2}6 &0                                     &0\\
    0                                    &0                                      &-\tfrac83  &0                        &0       &0                                   &0                                     &0\\
    0                                    &0                                      &0          &-\tfrac83-\tfrac{2a^2}3  &0       &0                                   &0                                     &0\\
    0                                    &0                                      &0          &0                        &0       &0                                   &0                                    &0\\
    0                                    &\tfrac{4\sqrt3}3 +\tfrac{\sqrt3a^2}{6} &0          &0                        &0       &-\tfrac43-\tfrac{a^2}6               &-\tfrac{4a}3                         &0\\
    -\tfrac{4\sqrt3}3-\tfrac{\sqrt3a^2}6 &0                                      &0          &0                        &0       &-\tfrac{4a}3                         &-\tfrac43-\tfrac{a^2}6               &0\\
    0                                    &0                                      &0          &0                        &0       &0                                    &0                                    &0
\end{smallmatrix}\right);
\end{equation*}
we give the Ricci tensor explicitly in this case, because it will be of importance for us as a means of comparing the algebras appearing in this section with those found in the next.

In accordance with \cite{Witt:Specialmetricsand}, the component of the traceless Ricci tensor that commutes with the almost-complex structures of the quaternionic structure is zero.

By studying the infinitesimal action of \( \so(8) \), we find that all the closed \( 4 \)-forms are induced by automorphisms:
for \( a=0 \), the subalgebra that preserves the Lie bracket is \( \bR^2\oplus\so(3) \); its intersection with \( \spto \) is \( 1 \)-dimensional, giving rise to a \( 3 \)-dimensional space of closed \( 4 \)-forms. If \( a=\pm4 \), the subalgebra that preserves the Lie bracket is \( \bR^2 \); it intersects \( \spto \) trivially. It gives rise to a \( 2 \)-dimensional space of closed \( 4 \)-forms.
Finally, in the case when \( a\neq0,\pm4 \), there is a \( \rm{U}(1) \) of automorphisms that acts as a circle action on \( \Span{f_4,f_8 }\); this \( \rm{U}(1) \) clearly does not preserve the \( 4 \)-form, so we find a \( 1 \)-dimensional space of closed \( 4 \)-forms. 

An explicit calculation and a dimension count show that in each case, the infinitesimal orbit contains no other closed \( 4 \)-forms. 

\subsubsection{The algebra \( \n_{11} \)} 
\label{ex:harmfinal}
The proof of Theorem~\ref{thm:main} is completed by considering the family of nilpotent Lie algebras 
\begin{equation*} 
  (-af^{27}, 0, af^{27}, 0, - b f^{47}-a f^{17} -a f^{37}-f^{27}, b f^{27}+f^{47}+f^{12}, 0), 
\end{equation*}
carrying an harmonic \( \SO(4) \)-structure with adapted coframe
\begin{equation*} 
  \tfrac1{\sqrt2}(f^4-f^6, f^5-f^2, f^1, 2f^7, -\tfrac1{\sqrt3}(f^1+2 f^3), \tfrac1{\sqrt3}(f^2+f^5), \tfrac{1}{\sqrt3}(f^4+f^6)).
\end{equation*}

For the parameter values \( a=0=b \), the Lie algebra is  decomposable and isomorphic to 
\( (0,0,0,0,0,12,14+23) \). When \( a=0\neq b \), we obtain \( (0,0,0,0,0,12,34) \).
Finally, if \( a\neq 0 \), we see that the underlying Lie algebra is isomorphic to
\begin{equation*} 
  \n_{11}=(0,0,0,0,12,13,14+25). 
\end{equation*}
 Again, in the reducible case, the structures cannot be decomposed so as to fit into the construction of \cite{Giovannini:SpecialStructuresAnd}.

If we set \( a=0 \), the Lie algebra is
\begin{equation*} 
  (0, 0, 0, 0, - b f^{47}-f^{27}, b f^{27}+f^{47}+f^{12}, 0).
\end{equation*}
We can assume that \( b\geq 0 \), since the symmetry \( f^1\mapsto -f^1 \), \( f^2\mapsto -f^2 \), \( f^3\mapsto -f^3 \), \( f^5\mapsto -f^5 \)
is an element of \( \SO(4) \) with the only effect that it changes the sign of \( b \).

Note that the resulting \( \SO(4) \)-structures are not the same as those that appear in section~\ref{example:F1}. However,  the resulting harmonic \( \Sp(2)\Sp(1) \)-structures are related through an automorphism of the \( 8 \)-dimensional Lie algebra which only acts on the \( 2 \)-dimensional center.

More precisely, denote by \( e_1,\dotsc, e_7 \) the adapted frame we have fixed on the present Lie algebra, and by \( \tilde e_1,\dotsc, \tilde e_7 \) the adapted frame considered in section~\ref{example:F1}. If we now set \( a=2\sqrt{b^2+1} \), we can construct a linear isomorphism between the two Lie algebras that maps eigenvectors of the Ricci tensor to eigenvectors of the same eigenvalues, as follows:
\begin{equation*}
  \begin{gathered}
  \tilde e_3\mapsto \tfrac2 a(be_3{+}e_4), \, \tilde e_4\mapsto \tfrac2a({-}e_3{+}be_4), \, \tilde e_5\mapsto e_5,\\
  \tilde e_1{\mp}\tilde e_2 {\pm}\sqrt3 \tilde e_6{+}\sqrt3\tilde e_7 \mapsto  \tfrac1{\sqrt{a}}\left(\sqrt{a{\mp}2}e_1 {\mp}\sqrt{a{\pm}2} e_2 {\pm} \sqrt3\sqrt{a\pm2} e_6{+}\sqrt3 \sqrt{a\mp2}e_7\right),\\
  {\mp}\sqrt3\tilde e_1{+}\sqrt3\tilde e_2 {+} \tilde e_6{\pm}\tilde e_7 \mapsto  \tfrac1{\sqrt{a}}\left({-}\sqrt3 \sqrt{a{\mp}2} e_1 {\pm}\sqrt3\sqrt{a{\pm}2} e_2{\pm} \sqrt{a{\pm}2}e_6{+}\sqrt{a{\mp}2}e_7\right).
   \end{gathered}
\end{equation*}
Up to a scale factor, this mapping is a Lie algebra isomorphism, and it maps the \( 4 \)-form \( \tilde\Omega \), corresponding to section~\ref{example:F1}, to
\begin{equation*}
  \begin{gathered}
    e^{3467}- e^{3458}-3  e^{5678}- e^{1267}-3  e^{1234}+e^{1258}+e^{147}(ke^8- 2he^5)\\
    +e^{23}\w (k e^5+2 he^8 ) \w e^6
    +e^{246}(ke^8- 2he^5)+e^{24}(ke^5+2he^8)e^{7}\\
    +e^{136}(ke^8-2he^5)
    - e^{14}(ke^5+ 2he^8)e^6+e^{13}(ke^5+ 2he^8)e^{7}- e^{237}(ke^8- 2he^5),
  \end{gathered}
\end{equation*}
where \( k  =\tfrac{a^2-8}{a^2} \) and \( h=2\tfrac{\sqrt{a^2-4}}{a^2} \). As \( k^2+4h^2=1 \), it now suffices to set 
\begin{equation*}
  \tilde e_5=k e_5 -2he_8, \,\tilde e_8 =  2h e_5+ke_8,
\end{equation*}
so as to obtain an isomorphism, up to scale, of the two \( 8 \)-dimensional Lie algebras that preserves the  \( \Sp(2)\Sp(1) \)-structures. However, the \( \SO(4) \)-structures are not equivalent because this transformation mangles the possibility to separate \( e_5 \) from \( e_8 \).

\bigskip

This completes the proof of Theorem \ref{thm:main}.

\qed

\subsection{Examples on solvable Lie algebras}

Using the same procedures as above enables us to find harmonic \( \SO(4) \)-structures on the following families of solvable Lie algebras.

\begin{example}
\label{ex:firstsolv}
The family of completely solvable unimodular Lie  algebras
\begin{equation*} 
  (c f^{27}{-}4 f^{17}, 4  f^{27},{-}4  f^{37}, 4 f^{47}{-} a f^{37},  b f^{27}{+} a f^{17}{+}f^{12}{+}f^{37}, f^{34}{-} b e^{37}{-}c e^{47}{-}f^{27}, 0),
\end{equation*}
\( a,b,c\in\bR \), admits a harmonic structure whose adapted coframe is given by
\begin{equation*}
  \tfrac1{\sqrt2}(f^5-f^3, f^2-f^6, f^1-f^4,2f^7, -\tfrac1{\sqrt3}{(f^1+f^4)}, \tfrac1{\sqrt3}{(f^2+f^6)}, \tfrac1{\sqrt3}{(f^3+f^5)}). 
\end{equation*}
This determines a harmonic structure on a compact solvmanifold if the corresponding Lie group admits a lattice.

In order to address the existence of a lattice, we first observe that each Lie group in this family is an almost nilpotent Lie group of the form \( \G=\bR\ltimes_\mu \rm{N} \) where \( \rm{N} \) is nilpotent and
\begin{equation*}
\mu(t)=\exp^{\rm{N}}\circ (\exp tA)\circ\log^{\rm{N}}, \quad A=\ad f_7\colon \n\to\n.
\end{equation*}

For simplicity, we will only consider the case when \( a=b=c=0 \). Prompted by \cite[Equation (4)]{Bock:OnLowDimensional}, we set \( e^{4t_1}=\frac12(3+\sqrt5) \) and consider the basis
\begin{equation*}
\begin{gathered}
\tilde f_1=f_1-4f_2+f_6,\quad \tilde f_2=\tfrac{18+8 \sqrt{5}}{7+3 \sqrt{5}}f_1 -\tfrac{8}{3+\sqrt{5}}f_2+ \tfrac{2}{3+\sqrt{5}}f_6,\\
\tilde f_3=-4f_3+f_4+f_5,\quad  \tilde f_4=\tfrac{-4(18+8 \sqrt{5})}{7+3 \sqrt{5}}f_3 +\tfrac{2}{3+\sqrt{5}}f_4+\tfrac{18+8 \sqrt{5}}{7+3 \sqrt{5}}f_5,\\
\tilde f_5=\sqrt 5 f_5, \quad \tilde f_6=\sqrt 5 f_6.
\end{gathered}
\end{equation*}

The automorphism \( \exp (t_1A) \) is then represented by the matrix
\begin{equation}
 \label{eqn:mut}
\exp (t_1A)=\left(
\begin{smallmatrix}
 3 & 1 & 0 & 0 & 0 & 0 \\
 -1 & 0 & 0 & 0 & 0 & 0 \\
 0 & 0 & 3 & 1 & 0 & 0 \\
 0 & 0 & -1 & 0 & 0 & 0 \\
 0 & 0 & 0 & 0 & 1 & 0 \\
 0 & 0 & 0 & 0 & 0 & 1 \\
\end{smallmatrix}
\right). 
\end{equation}
Since the structure constants of \( N \) are given by 
\begin{equation*}
[\tilde f_1,\tilde f_2]=4\tilde f_5, \quad [\tilde f_3,\tilde f_4]=4\tilde f_6,
\end{equation*}
we can make the explicit identification \( \rm{N}\cong\bR^6 \) by defining the product law as
\begin{equation*}
(x_1,\dotsc, x_6)(y_1,\dotsc, y_6)=(x_1+y_1,\ldots,x_4+y_4,x_5+y_5-x_1y_2,x_6+y_6-x_3y_4),
\end{equation*}
and identify the basis \( \{\tilde f_j\} \) with the left-invariant vector fields 
\begin{equation*}
\begin{gathered}
\tilde f_1=\frac{\partial }{\partial {x_1}},\quad\tilde f_2=\frac{\partial }{\partial {x_2}}-x_1\frac{\partial }{\partial {x_5}},\quad\tilde f_3=\frac{\partial }{\partial {x_3}},\\
\tilde f_4=\frac{\partial }{\partial {x_4}}-x_3\frac{\partial }{\partial {x_6}},\quad \tilde f_5=\frac14\frac{\partial }{\partial {x_5}},\quad \tilde f_6=\frac14\frac{\partial }{\partial {x_6}}.
\end{gathered}
\end{equation*}

The exponential map \( \exp^{\rm{N}}(t a_i \tilde f_i) \) is determined by solving the ODE \( g'(t)=L_{g(t)*}(a_i\tilde f_i) \). Concretely, we have 
\begin{equation*}
\exp^{\rm{N}}(a_i \tilde f_i)=(a_1, \ldots, a_4, \tfrac14a_5-\tfrac12a_1a_2, \tfrac14a_6-\tfrac12a_3a_4),
\end{equation*}
and
\begin{equation*}
\log^{\rm{N}}(x_1,\dotsc, x_6)=x_1\tilde f_1+\dots+x_4\tilde f_4+(4x_5+2x_1x_2)\tilde f_5+(4x_6+2x_3x_4)\tilde f_6.
\end{equation*}

Since the lattice \( \Gamma\subset\rm{N} \) corresponding to the inclusion \( \bZ^6\subset\bR^6 \) is clearly preserved by \( \mu(t_1) \), the discrete subgroup 
\begin{equation*}
\bZ t_1\ltimes_\mu \Gamma
\end{equation*}
forms a lattice in \( \G=\bR\ltimes_\mu \rm{N} \).
\end{example}

\begin{example}
A harmonic structure can be put on the family of unimodular solvable Lie algebras given by
\begin{equation*} 
  (2 f^{17}, {-}4 f^{27}, {-} a f^{27}{-}2 f^{37}{-}f^{17}{-} b f^{47}, 6  f^{47},f^{12}{+} bf^{17}{-}2 f^{57},  a f^{17}{+}f^{27}{+}f^{15}, 0), 
\end{equation*}
for \( a,b\in\bR \). The associated coframe is
\begin{equation*}
  \tfrac{1}{\sqrt2}  (f^6{-}f^2, f^5{-}f^4, f^1{-}f^3,  2f^7,{-}\tfrac{1}{\sqrt3}{(f^1{+}f^3)}, \tfrac{1}{\sqrt3}{(f^4{+}f^5)}, \tfrac{1}{\sqrt3}{(f^2{+}f^6)}).
\end{equation*}
It would be interesting to know whether the corresponding solvable Lie groups admit a lattice; this case appears to be more involved than Example~\ref{ex:firstsolv} because the nilradical has step three.
\end{example}

\section{Invariant intrinsic torsion}
\label{sec:invtor}

The intrinsic torsion of an \( \SO(4) \)-structure is dubbed \emph{invariant} if it takes values in the trivial submodule \( 2\bR\subset T^*\otimes\so(4)^\perp \). We may think of the latter space as the cokernel of the alternating map
\begin{equation}
  \label{eqn:alternating}
 \partial\colon\, T^*\otimes \so(4)\hookrightarrow T^*\otimes T^*\otimes T\to \Lambda^2T^*\otimes T.
\end{equation}
Then the trivial module \( 2\bR \) is the image of \( \Span{\tau_1,\tau_2} \subset T^*\otimes\so(7)\), where
\begin{equation*}
  \tau_1 =\sum_{i=1}^7e^i\otimes e_i\hook\alpha, \, \tau_2 = w^1\otimes w^{23}+w^2\otimes w^{31}+w^3\otimes w^{12},
\end{equation*}
and we have identified vectors with covectors using the metric. 

Using the alternation map \( \mathbf{a} \) (cf. section \ref{sec:SO4str}), we compute
\begin{equation*}
  \mathbf{a}(\tau_1\cdot\alpha)=-2\beta+6\nu,\,\mathbf{a}(\tau_2\cdot\alpha)=2\beta+6\nu,
\end{equation*}
\begin{equation*}
  \mathbf{a}(\tau_1\cdot *\upsilon)=2\beta+6\nu,\,\mathbf{a}(\tau_2\cdot *\upsilon)=0.
\end{equation*}
Thus if the intrinsic torsion has the form \( \lambda\tau_1 + \mu\tau_2 \), then we obtain
\begin{equation*}
  d\alpha = 2(-\lambda+\mu)\beta+6(\lambda+\mu) \upsilon, \, d(\Hodge\upsilon) = 2\lambda(\beta + 3\upsilon).
\end{equation*}
In particular, the ``\(  \lambda \) component'' of the intrinsic torsion is completely determined
by \( d(\Hodge\upsilon) \). 

Since there are no invariants of degree \( 5 \), the forms \( \upsilon \) and \( \beta \) are closed.
Using this fact and by imposing \( d^2=0 \), we obtain the equations
\begin{equation*} 
  2(-d\lambda+d\mu)\w \beta+6(d\lambda+d\mu) \w \upsilon = 0 = 2d\lambda\w (\beta + 3\upsilon),
\end{equation*}
which, by injectivity of the map \( T^*\ni\delta\mapsto \delta\wedge(\beta+3\upsilon)\in\Lambda^5T^* \), implies that \( \lambda \) and \( \mu \) are necessarily constant.

\begin{proposition}
\label{prop:consttor}
Any \( \SO(4) \)-structure with invariant intrinsic torsion has constant torsion.

\end{proposition}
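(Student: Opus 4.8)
The plan is to show that the two scalar functions encoding an invariant torsion are locally constant, so that $\xi$ does not depend on the point $n\in N$.

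First I would use that an invariant torsion is a section of the trivial module $2\bR$, and hence write $\xi=\lambda\tau_1+\mu\tau_2$ for functions $\lambda,\mu\in C^\infty(N)$, reducing the claim to $d\lambda=0=d\mu$. Applying $d=\alt\circ\LC$ together with $\LC\alpha=\xi\cdot\alpha$ and the analogous identity for $\Hodge\upsilon$, and using that $\alt$ and the module action are pointwise linear in the coefficients, the precomputed alternations $\alt(\tau_i\cdot\alpha)$ and $\alt(\tau_i\cdot\Hodge\upsilon)$ yield
\[
  d\alpha=2(-\lambda+\mu)\beta+6(\lambda+\mu)\upsilon,\qquad d\Hodge\upsilon=2\lambda(\beta+3\upsilon).
\]
Note that no derivative terms in $\lambda,\mu$ appear here: the Levi-Civita connection is torsion-free, so $d\alpha=\alt(\LC\alpha)$ exactly, and $\alt$ pulls the scalar coefficients out unchanged.

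Next I would record that $\beta$ and $\upsilon$ are closed. For an invariant form, $d$ is an $\SO(4)$-equivariant image of $\xi\in 2\bR$, hence an invariant $5$-form; since there are no invariants of degree $5$, this forces $d\beta=0=d\upsilon$. Differentiating the two displayed identities and imposing $d^2=0$ then produces the first-order conditions
\[
  (-d\lambda+d\mu)\w\beta+3(d\lambda+d\mu)\w\upsilon=0,\qquad d\lambda\w(\beta+3\upsilon)=0.
\]

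The decisive step, and the only genuinely non-formal input I anticipate, is the injectivity of the wedge map $\delta\mapsto\delta\w(\beta+3\upsilon)$ on $T^*$. Here I would first simplify $\beta+3\upsilon=-(\omega_1\w w^{23}+\omega_2\w w^{31}+\omega_3\w w^{12})$, which is $\SO(4)$-invariant; the map is therefore $\SO(4)$-equivariant and its kernel is a submodule of $T^*\cong\Smod^{1,1}\oplus\Smod^{2,0}$. By Schur's lemma it then suffices to exhibit one nonzero image in each irreducible summand, for instance $w^1\w(\beta+3\upsilon)=-\omega_1\w w^{123}\neq0$ on $\Smod^{2,0}$ and a one-line check that $e^1\w(\beta+3\upsilon)\neq0$ on $\Smod^{1,1}$, which forces the kernel to be trivial. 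Injectivity applied to the second condition gives $d\lambda=0$; feeding this into the first leaves $d\mu\w(\beta+3\upsilon)=0$, whence $d\mu=0$. Thus $\lambda$ and $\mu$ are constant, the torsion is independent of the point, and the proposition follows. The only place requiring care is ensuring the two conditions from $d^2=0$ decouple in the right order; the equivariance reduction via Schur keeps the injectivity verification to two explicit computations rather than a full seven-vector check.
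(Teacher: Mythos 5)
Your proposal is correct and follows essentially the same route as the paper: write $\xi=\lambda\tau_1+\mu\tau_2$, derive the formulas for $d\alpha$ and $d\Hodge\upsilon$, use the absence of degree-$5$ invariants to close $\beta$ and $\upsilon$, impose $d^2=0$, and conclude via injectivity of $\delta\mapsto\delta\w(\beta+3\upsilon)$. The only addition is your Schur-lemma verification of that injectivity, which the paper asserts without proof; this is a worthwhile (and correct) elaboration rather than a different argument.
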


\subsection{The proof of Theorem \ref{thm:invintrtor}}

The simplest examples of \( \SO(4) \)-structures with invariant intrinsic torsion are given by Lie groups \( \LH \). Let \( e^1,\dotsc, e^4\), \(w^1,w^2,w^3 \) be a basis of \( \h^* \) adapted to an \( \SO(4) \)-structure. The torsion \( \tau \) of the flat connection can be expressed as \( dw^s\otimes w^s +de^j\otimes e^j\), and its projection to \( T^*\otimes\so(4)^\perp \) is invariant if and only if \( \tau \) lies in \( 2\bR\oplus \partial(T^*\otimes\so(4)) \), where \( \so(4) \) is given by \eqref{eq:so4stab}, and the \( 2\bR \) component is 
\begin{equation*}
 \vartheta = a\omega_s\otimes w^s + c e_j\hook\omega_s\w w^s\otimes e^j.
\end{equation*}
Throughout the section, we shall use the Einstein summation convention.

\subsubsection{The algebra \( \h_1 \)}
Consider the element \( \sum_{s,t} w^s\otimes\varpi_t \) of the module \( \Smod^{2,2}\subset T^*\otimes \so(4) \). 
Its image under the map \( \partial \) is given by:
\begin{equation*}
\begin{gathered}
(e^2+e^3+e^4 )\w w \otimes e^1-(e^1-e^3+e^4)\w w\otimes e^2\\
-(e^1+e^2-e^4)\w w\otimes e^3-(e^1-e^2+e^3)\w w\otimes e^4,
\end{gathered}
\end{equation*}
where \( w=w^1+w^2+w^3 \). It follows that the solvable Lie algebra \( \h_1 \), given by
\begin{equation*}
\begin{gathered}
de^1=(e^2+e^3+e^4)\w w,\, de^2=-(e^1-e^3+e^4)\w w,\\
de^3=-(e^1+e^2-e^4)\w w,\, de^4=-(e^1-e^2+e^3)\w w\\
dw^1=0 =dw^2=dw^3,
\end{gathered}
\end{equation*}
has an \( \SO(4) \)-structure with vanishing intrinsic torsion. 

The three \( 2 \)-forms of \eqref{eq:sp2forms} are globally defined and hence determine an almost hypercomplex structure on \( S^1\times \LH_1 \) which is compatible with the product metric. In other words, we have an \( \Sp(2) \)-structure on \( S^1\times \LH_1 \). As \( d\sigma_i=0 \), we see that this structure is, in fact, hyper-K\"ahler and flat (cf. \cite[Lemma 6.8]{Htichin:TheSelfDuality}).

\bigskip

To obtain examples with a non-flat metric, consider next the element
\begin{equation*}
\begin{gathered}
\chi=w^1\otimes(e^{12}-e^{34}-2w^{23})+w^2\otimes(e^{13}-e^{42}-2w^{31})\\
              +w^3\otimes(e^{14}-e^{23}-2w^{12})
\end{gathered}
\end{equation*}
that spans the trivial component \( \bR\subset \Smod^{0,2}\otimes\Smod^{0,2}\subset T^*\otimes \so(4) \). Straightforward computations show that
\begin{equation*}
\begin{gathered}
\tau=\vartheta+b\partial(\chi)=(a\omega_s-4bw_s\hook w^{123})\otimes w^s
                                +(b+c)(e_j\hook \omega_s\w w^s)\otimes e^j.
\end{gathered}
\end{equation*}

Now, let us write \( p=a \), \(q=-4b \), and \( r=b+c \) and, in addition, impose \( d^2=0 \) so as to get:
\begin{lemma}
If \( p,q,r \in \bR \) satisfy the equations 
\begin{equation}
  \label {eq:invLiedsq}
  pq=0=pr =r(q+2r),
\end{equation}
then the relations
\begin{equation}
 \label{eqn:CITstructureeqns}
 de^j=re_j\hook\omega_s\w w^s,\, dw^s = p\omega_s + qw_s\hook w^{123}
\end{equation}
are the structural equations of a Lie algebra with an \( \SO(4) \)-structure
whose intrinsic torsion is invariant. 
\end{lemma}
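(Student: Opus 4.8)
The statement has two essentially independent parts: that the assignment \eqref{eqn:CITstructureeqns} genuinely defines a Lie algebra, and that the resulting \( \SO(4) \)-structure has invariant intrinsic torsion. The plan is to treat these separately, with almost all of the effort going into the first. For that part I would invoke the Chevalley--Eilenberg dictionary: \eqref{eqn:CITstructureeqns} extends uniquely to a degree-one antiderivation \( d \) of \( \Lambda^*\h^* \), and \( d \) is the differential of a Lie algebra structure on \( \h \) precisely when \( d^2=0 \). Since \( d^2 \) is an (even) derivation of degree two, it vanishes identically as soon as it vanishes on the generators, so it suffices to check \( d^2e^j=0=d^2w^s \). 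For each of these I would expand via the Leibniz rule and the defining relations, and then sort the resulting terms by bidegree \( \Lambda^{p,q} \) (as in Section~\ref{sec:SO4str}), since pieces of distinct bidegree must cancel separately.

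For \( d^2w^s=p\,d\omega_s+q\,d(w_s\hook w^{123}) \), I expect both summands to land in \( \Lambda^{2,1} \): differentiating \( \omega_s \) produces a factor \( r \) through the \( de^j \), while differentiating \( w_s\hook w^{123} \) retains only the \( p\omega_t \) part of each \( dw^t \), because the \( q \)-part wedges a repeated \( w \) to zero. Thus \( d^2w^s \) is a combination of terms proportional to \( pr \) and to \( pq \), and, since the relevant \( \Lambda^{2,1} \)-forms are nonzero, its vanishing yields exactly \( pr=0 \) and \( pq=0 \).

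For \( d^2e^j=r\,d\bigl(e_j\hook\omega_s\w w^s\bigr) \) the expansion splits into a \( \Lambda^{3,0} \) piece, arising from \( (e_j\hook\omega_s)\w p\omega_s \) and proportional to \( pr \), and a \( \Lambda^{1,2} \) piece with two contributions: one of order \( r^2 \) coming from \( d(e_j\hook\omega_s) \), and one of order \( rq \) coming from the \( q\,w_s\hook w^{123} \) part of \( dw^s \). The delicate point, and the main obstacle, is to show that these two \( \Lambda^{1,2} \) contributions are proportional with ratio exactly \( -2 \); concretely, that the form obtained by differentiating \( e_j\hook\omega_s \) and wedging with \( w^s \) equals \( -2\sum_s (e_j\hook\omega_s)\w(w_s\hook w^{123}) \). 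Granting this, the \( \Lambda^{1,2} \) piece becomes a nonzero multiple of \( r(q+2r) \), so its vanishing is precisely the third equation of \eqref{eq:invLiedsq}. Establishing this identity is routine but sign-sensitive: it requires listing the one-forms \( e_j\hook\omega_s \) explicitly and carefully tracking the interior-product and wedge relations among the \( \omega_s \) and the \( w^s \). Collecting the three bidegree conditions gives \( d^2=0 \iff pq=0=pr=r(q+2r) \).

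For the second part, once \( d^2=0 \) the coframe \( e^1,\dots,e^4,w^1,w^2,w^3 \) is by construction adapted to an \( \SO(4) \)-structure on \( \h \), defining \( \alpha \) and \( \beta \) through \eqref{eq:defforms}. The torsion of the flat connection is \( \tau=dw^s\otimes w^s+de^j\otimes e^j \), and substituting \eqref{eqn:CITstructureeqns} reproduces exactly \( \vartheta+b\,\partial(\chi) \) under the identification \( p=a \), \( q=-4b \), \( r=b+c \) (which is solvable for \( a,b,c \) given any \( p,q,r \)). Hence \( \tau \) lies in \( 2\bR\oplus\partial(T^*\otimes\so(4)) \), which is the criterion established earlier for the intrinsic torsion to be invariant, completing the argument.
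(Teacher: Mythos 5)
Your proposal is correct and follows essentially the same route as the paper: the paper obtains the structure equations \eqref{eqn:CITstructureeqns} by writing the torsion of the flat connection as \( \vartheta+b\,\partial(\chi) \) with \( p=a \), \( q=-4b \), \( r=b+c \) (your second part, read backwards) and then simply ``imposes \( d^2=0 \)''; you supply the bidegree bookkeeping that the paper leaves implicit, and your key identity \( \sum_s d(e_j\hook\omega_s)\w w^s=-2r\sum_s(e_j\hook\omega_s)\w(w_s\hook w^{123}) \) does hold, so the \( \Lambda^{1,2} \) piece of \( d^2e^j \) is indeed a nonzero multiple of \( r(q+2r) \), and the \( \Lambda^{3,0} \) piece a nonzero multiple of \( pr \).

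One inaccuracy in your treatment of \( d^2w^s \): the two \( \Lambda^{2,1} \) contributions are not independent but proportional. For \( s=1 \) one computes \( p\,d\omega_1=2pr(\omega_2\w w^3-\omega_3\w w^2) \) and \( q\,d(w^{23})=pq(\omega_2\w w^3-\omega_3\w w^2) \), so \( d^2w^1=p(q+2r)(\omega_2\w w^3-\omega_3\w w^2) \); hence \( d^2w^s=0 \) forces only \( p(q+2r)=0 \), not \( pq=0 \) and \( pr=0 \) separately, and your justification (``the relevant \( \Lambda^{2,1} \)-forms are nonzero'') is not the right reason. This does not damage the lemma, which asserts only the ``if'' direction and whose hypotheses annihilate each term individually; and even the equivalence \( d^2=0\iff pq=0=pr=r(q+2r) \) you state at the end remains true, because \( pr=0 \) is recovered from the \( \Lambda^{3,0} \) piece of \( d^2e^j \), after which \( p(q+2r)=0 \) is equivalent to \( pq=0 \). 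But as written the step overclaims what \( d^2w^s=0 \) alone yields.
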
 

Note that, in terms of \( a,b,c \), we find that \( \lambda=-a\slash2 \) and \( \mu=a-2c \). In particular,
the intrinsic torsion is independent of the component \( \partial(\chi) \) of \( \tau \), as expected. 

Up to a scale factor, \eqref{eqn:CITstructureeqns} gives rise to exactly three non-trivial examples. 

\subsubsection{The algebra \( \h_2 \)}
If we take \( (p,q,r)=(0,1,0) \) then \( de^i=0 \), \( dw^s =w_s\hook w^{123} \) which characterises
the product \( \LH_2=\Sp(1)\times \mathbb{T}^4 \). Using the relations
\begin{equation*}
\label{eq:invtorparam}
\lambda=-\tfrac{p}2,\,\mu=p-\tfrac{q}2 -2r,
\end{equation*}
we see that \( (\lambda,\mu)=(0,-1\slash2) \) and hence \( d\alpha=-\beta-3\upsilon \), \( d(\Hodge\upsilon)=0 \).

The product of \( \LH_2 \) with an \( S^1 \), \( M=S^1\times\Sp(1)\times\mathbb{T}^4 \), can also be viewed as the product of two quaternionic manifolds of dimension \( 4 \); one of the factors is hyper-K\"ahler and the other is locally conformal hyper-K\"ahler. It is not difficult to verify that the \( 2 \)-forms \( \sigma_i \) define a hypercomplex structure on \( M \). In fact, the associated hyper-Hermitian structure is ``hyper-K\"ahler with torsion'' (briefly \HKT), which, by \cite[Proposition 6.2]{Cabrera-S:TheIntrinsicTorsion}, means it satisfies the condition \( I_1d\sigma_1 = I_2d\sigma_2=I_3d\sigma_3 \). 

In the notation of \cite{Swann:AspectsSymplectiquesDe}, the two factors of \( M \) have intrinsic torsion in \( EH \), and the product has intrinsic torsion in \( EH\oplus KH \). This is consistent with the fact that the ideal generated by \( \sigma_1 \), \( \sigma_2 \), and \( \sigma_3 \) is not a differential ideal. 

A more systematic way of determining the intrinsic torsion is as follows. The Levi-Civita connection is \( w^1\otimes w^{23}-w^2\otimes w^{13}+w^3\otimes w^{12} \), and the intrinsic torsion is minus its projection to \( T^*_mM\otimes (\lie{sp}(2)+\lie{sp}(1))^\perp \). Composing with the skew-symmetrisation map \( \mathbf{a}\colon T^*_mM\otimes \so(8)\to \Lambda^3T_m^*M \), we obtain 
\begin{equation*}
 \tfrac3{20}(w^1\w \sigma_1 + w^2\w \sigma_2 + w^3\w\sigma_3)  + \gamma, \, \gamma\in KH.
\end{equation*}
The second term is not zero, and the first term lies in a submodule isomorphic to \( EH \), namely the image of the equivariant map 
\begin{equation*}
  EH\to \Lambda^3T^*M, \, v\mapsto  (v\hook\sigma_s)\w \sigma_s.
\end{equation*}

Now observe that the kernel of \( \mathbf{a} \) intersects \( T^*_mM\otimes (\spto))^\perp \) in \( KS^3H \), so this determines all of the intrinsic torsion except possibly for \( KS^3H \). Since we have already observed that the structure is quaternionic, this component is forced to be zero. Summing up, the only non-trivial components in the intrinsic torsion are \( KH \) and \( EH \).

\subsubsection{The algebra \( \h_3 \)}
Consider the case when \( (p,q,r)=(1,0,0) \). Then we have the structural equations \( de^i=0 \), \( dw^s = \omega_s \), giving the quaternionic Heisenberg group \( \LH_7 \). In this case, the intrinsic torsion is \( (\lambda,\mu)=(-1/2,1) \) which implies
\( d\alpha=3\beta+3\upsilon\), \( d(\Hodge\upsilon)=-\beta-3\upsilon \).

The product \( M=S^1\times \LH_7 \) is not \HKT. It is, however, a so-called \QKT manifold (see \cite[Definition 7.1]{Cabrera-S:TheIntrinsicTorsion}) and interestingly appears in \cite{Fino-G:Propertiesofmanifolds}. 

The same arguments as used for \( \h_2 \) show that the non-vanishing components of the intrinsic torsion are \( KH \) and \( EH \).

\subsubsection{The algebra \( \h_4 \)}
If \( (p,q,r)=(0,-2,1) \), we obtain the Fino-Tomassini example \( \LH_4=\Sp(1)\ltimes \bR^4 \) (cf. \cite{Fino-Tomassini:GeneralizedG2}), meaning the Lie group 
whose Lie algebra is given by \( de^i=e_i\hook\omega_s\w w^s \), \( dw^s = -2w_s\hook w^{123} \). Note that we may view \( \LH_4 \) as \( \Sp(1)\ltimes \mathbb H \) 
by defining multiplication by \( i,j,k \) via the adjoint action of \( w_1, w_2 \), and \( w_3 \). The manifold \( \LH \), notably, has a compact quotient. 

Again, we can compute the exterior derivatives of \( \alpha \) and \( \Hodge\upsilon \)
by using \( (\lambda,\mu)= (0,-1) \). We find \(  d\alpha=-2\beta-6\upsilon \) and \( d(\Hodge\upsilon)=0 \).

The same arguments we have used before apply to the product \linebreak \( \Sp(2)\Sp(1) \)-structure and show that it has non-zero intrinsic torsion in both components of \( EH\oplus KH \). In particular, we have provided another example of a \QKT structure which is not \HKT.

\begin{remark}
The equations used to construct invariant intrinsic torsion structures on \( \h_2, \h_3,\h_4 \) also apply to give homogeneous examples that are not Lie groups. For instance, the \( 7 \)-dimensional sphere, \( S^7 \), viewed as the homogeneous space \( \Sp(2)\Sp(1)\slash\Sp(1)\Sp(1) \) (cf. \cite[Example 5]{Conti:Intrinsictorsionin}), admits a natural \( \SO(4) \)-structure with invariant intrinsic torsion. This corresponds to \( (p,q,r)=(1,0,1) \).  
\end{remark}

The remaining classes of examples, appearing in Theorem \ref{thm:invintrtor}, are closely tied to self-dual Einstein Lie groups. In order to arrive at these classes, one can impose suitable conditions on the torsion of the flat connection in order to ensure \( \h \) has structural equations given by
\begin{equation*}
\begin{gathered}
de^i=-\phi(w_s)\w e_i\hook\omega_s - \psi(w_s)\w  e_i\hook\varpi_s,\\
dw^s=p\omega_s+q w_s\hook w^{123}-\phi(w_t)\w   w_s\hook (-2w_t\hook w^{123}). 
\end{gathered}
\end{equation*}
In the above, \( \phi,\psi\colon\, \bR^3\to\bR^4 \) are linear maps representing the component of the torsion in \( \partial((\bR^4)^*\otimes\so(4)) \),  and as before we identify vectors and covectors via \( e_i\mapsto e^i \), \( w_s\mapsto w^s \). These equations define a Lie algebra if and only if \( d^2=0 \); then the standard coframe defines an \( \SO(4) \)-structure with invariant intrinsic torsion.

In this case, we can view \( \Span{e^1,\dotsc,e^4} \) as the dual of a Lie algebra \( \lk \), and the projection \( \h\to\lk \) is a Lie algebra homomorphism. The exterior derivative is required to satisfy
\begin{equation*}
\begin{gathered}
0= d^2w^1\\
=pq(\omega_2\w w^3-\omega_3\w w^2)+4\phi(w_1) \w(\phi(w_2)\w    w^2  +\phi(w_3)\w   w^3 )\\
-\phi(w_s)\w  ( -2\phi(w_2)\hook\omega_s \w w^3 +2\phi(w_3)\hook\omega_s \w   w^2),\\
- \psi(w_s)\w ( -2\phi(w_2)\hook\varpi_s \w w^3 +2\phi(w_3)\hook\varpi_s \w   w^2),\\
\end{gathered}
\end{equation*}
which, by considering the cyclic permutations, leads to the relation:
\begin{equation}
\label{eqn:flambdamu1}
 \phi(w_t)\w  \phi(w_s)\hook\omega_t + \psi(w_t)\w \phi(w_s)\hook\varpi_t =\langle \phi\w\phi, w_s\hook w^{123}\rangle-\tfrac12pq\omega_s.
\end{equation} 
Thus, \( \h \) is a Lie algebra if and only if \( \lk \) is a Lie algebra and \eqref{eqn:flambdamu1} holds.

Decreeing \( e^1,\ldots, e^4 \) to be an oriented orthonormal basis of \( \lk \), we can interpret this equation in terms of the Levi-Civita connection, \( -\phi(w_s)\otimes \omega_s -\psi(w_s)\otimes \varpi_s \), on \( \lk \). Indeed, its associated curvature is given by 
\begin{equation*}
\begin{gathered}
R=-d\phi(w_s)\otimes \omega_s -d\psi(w_s)\otimes \varpi_s
+ \tfrac12\phi(w_s)\w \phi(w_t)\otimes [\omega_s,\omega_t]\\
+ \tfrac12\psi(w_s)\w \psi(w_t)\otimes [\varpi_s,\varpi_t]
\end{gathered}
\end{equation*}
which means that \eqref{eqn:flambdamu1} is equivalent to
\begin{equation*}
R= -\tfrac12pq\omega_s\otimes \omega_s-d\psi(w_s)\otimes \varpi_s
+\langle \psi\w\psi, w_t\hook w^{123}\rangle\otimes \varpi_t.
 \end{equation*}
In particular, \( \lk \) is Einstein and self-dual with scalar curvature \( 3pq \) (see \cite{Salamon:TopicsInFour}). 

From the classification  of flat and anti-self-dual metrics on \( 4 \)-dimensional Lie groups in \cite{Milnor:Curvaturesofleft} and \cite{De-Smedt-S:Anti-self-dual}, we deduce that there are essentially three non-trivial possibilities.

\subsubsection{The families \( \h_5^a,  \h_6^a \)}
Assume first that \( \lk \) is flat, but not Abelian (otherwise we would recover the examples \( \h_2 \) and \( \h_3 \)). Then, by \cite{Milnor:Curvaturesofleft}, we can assume \( \lk = (0,0,14,-13) \) so that
\begin{equation*}
\phi(w_1)=-\psi(w_1) = \tfrac12  e_1,\, \phi(w_2) =0=\psi(w_2)=\phi(w_3)=\psi(w_3).
\end{equation*}
By construction \( pq=0 \); for \( (p,q)=(a,0) \) we obtain \( \h_5^a \), and \( \h_6^{a} \) for \( (p,q)=(0,a) \). In either case, the almost hypercomplex structure defined by the forms \( \sigma_i \) on \( S^1\times \LH \) is not integrable; the non-vanishing components of the intrinsic torsion are \( EH \) and \( KH \), apart from the case \( a=0 \) when \( \h_5^0=\h_6^0 \) is flat and hyper-K\"ahler.

\subsubsection{The family \( \h_7^{a,\kappa} \)}
If \( \h \) is only conformally flat, but not flat, the Einstein condition implies that it has constant sectional curvature. The \( 4 \)-sphere has no Lie group structure, so we are dealing with hyperbolic space, which has a one-parameter family of Lie group structures (cf. \cite{Jensen:HomogeneousEinsteinSpaces}) corresponding to \( \lk = (0,12,13+\kappa14,14-\kappa13) \). We then compute
\begin{equation*}
\begin{gathered}
 \phi(w_1) =  \tfrac{\kappa}2e_1 -\tfrac12 e_2,\, \psi(w_1) = - \tfrac{\kappa}2 e_1 -\tfrac12 e_2,\\
\phi(w_2)=-\tfrac12 e_3 = \psi(w_2),\, \phi(w_3) =  -\tfrac12 e_4 = \psi(w_3),
\end{gathered}
\end{equation*}
and \( pq=-1 \). The corresponding \( 8 \)-manifold \( S^1\times \LH \) is only hypercomplex when \( \kappa=0 \); regardless of \( \kappa \), the non-zero intrinsic torsion is in \( EH\oplus KH \).

\subsubsection{The family \( \h_8^{a,\kappa} \)}
If \( \lk \) is not conformally flat,  based on the study of \cite{De-Smedt-S:Anti-self-dual},  we deduce that \( \lk \) is a solvable algebra belonging to the family: 
\begin{equation*}
 \lk=(0,f^{12}+\kappa f^{13}, -\kappa f^{12}+f^{13},2f^{14}+f^{23}). 
 \end{equation*}

A multiple of \( (f^1,\tfrac12f^2,\tfrac12f^3,\tfrac12f^4) \) forms an orthonormal basis. In other words, up to scale,  we have an orthonormal basis \( e^i \) such that 
\begin{equation*}
\lk=(0,e^{12}+\kappa e^{13}, -\kappa e^{12}+e^{13},2e^{14}+2e^{23}).
\end{equation*}

Considering the connection form of \( \lk \), we deduce that
\begin{equation*}
\begin{gathered}
\phi(w_1)=-e_2,\,\phi(w_2)=-e_3,\,\phi(w_3)=-\tfrac{1}{2} e_4+\tfrac{\kappa}2  e_1,\\
\psi(w_1)=0=\psi(w_2),\,\psi(w_3)=-\tfrac32 e_4-\tfrac{ \kappa}2  e_1.
\end{gathered}
\end{equation*}
So in this case, the relation~\eqref{eqn:flambdamu1} is satisfied with \( pq=-2 \), and the curvature form of \( \h \) is given by 
\begin{equation*}
  \omega_s\otimes \omega_s  +3\varpi_3\otimes\varpi_3. 
\end{equation*}

In summary, we have obtained a \( 2 \)-parameter family of \( 7 \)-dimensional Lie algebras with a constant intrinsic torsion \( \SO(4) \)-structure; actually, there are three parameters, namely \( p,q,\kappa \), but these are subject to the constraint \( pq=-2 \).

The \( \Sp(2) \)-structure, induced on the \( 8 \)-manifold \( S^1\times \LH \) via the \( 2 \)-forms \( \sigma_i \), does not correspond to an integrable hypercomplex structure nor satisfies the differential ideal condition. In terms of the intrinsic torsion, we find that  the only non-zero components are \( EH \) and \( KH \). 

\bigskip

We emphasise that the only Einstein examples obtained above are the flat ones, i.e., the structures on \( \h_1 \) and \( \h_5^0=\h_6^0 \). Similarly, one can check directly that among the \( \Sp(2)\Sp(1) \)-structures constructed in this section, the only ones that satisfy the differential ideal condition are those corresponding to these two, consistently with the intrinsic torsion calculations.

This concludes the proof of Theorem~\ref{thm:invintrtor}.

\qed

\bibliographystyle{plain}

\end{document}